\documentclass[psamsfonts,fceqn,leqno]{amsart}
\usepackage{mathrsfs,latexsym,amsfonts,amssymb,curves,epic}
\setcounter{page}{1}
\setlength{\textwidth}{13.5cm}
\hoffset=0.7cm
\voffset=-0.5cm
\setlength{\textheight}{21.5cm} \setlength{\evensidemargin}{0.8cm}
\setlength{\oddsidemargin}{0.8cm} \setlength{\topmargin}{0.8cm}
\usepackage{color}

\newtheorem{theorem}{Theorem}[section]
\newtheorem{corollary}[theorem]{Corollary}
\newtheorem{proposition}[theorem]{Proposition}
\newtheorem{lemma}[theorem]{Lemma}

\theoremstyle{definition}

\def\N{{\mathbb N}}

\begin{document}

\title[The $k$-spaces property of free Abelian topological groups over non-metrizable La\v{s}nev spaces]
{The $k$-spaces property of free Abelian topological groups over non-metrizable La\v{s}nev spaces}

\author{Fucai Lin}
\address{(Fucai Lin): School of mathematics and statistics,
Minnan Normal University, Zhangzhou 363000, P. R. China}
\email{linfucai2008@aliyun.com; lfc19791001@163.com}

\author{Chuan Liu}
\address{(Chuan Liu): Department of Mathematics,
Ohio University Zanesville Campus, Zanesville, OH 43701, USA}
\email{liuc1@ohio.edu}

\thanks{The first author is supported by the NSFC (Nos. 11571158, 11201414, 11471153),
  the Natural Science Foundation of Fujian Province (No. 2012J05013) of China
  and Training Programme Foundation for Excellent Youth Researching Talents
  of Fujian's Universities (JA13190)}

\keywords{Free Abelian topological groups; $k$-spaces; La\v{s}nev spaces; non-metrizable spaces; sequential spaces.}
\subjclass[2000]{primary 22A30; secondary 54D10; 54E99; 54H99}

\begin{abstract}
Given a Tychonoff space $X$, let $A(X)$ be the free Abelian topological group over $X$ in the sense
  of Markov. For every $n\in\mathbb{N}$, let $A_n(X)$ denote
  the subspace of $A(X)$ that consists of words of reduced length
  at most $n$ with respect to the free basis $X$. In this paper, we show that $A_4(X)$ is a $k$-space if and only if $A(X)$ is a $k$-space for the non-metrizable La\v{s}nev space $X$, which gives a complementary for one result of K. Yamada's.
  In addition, we also show that, under the assumption of $\flat=\omega_1$, the subspace $A_3(X)$ is a $k$-space if and only if $A(X)$ is a $k$-space for the non-metrizable La\v{s}nev space $X$. However, under the assumption of $\flat>\omega_1$, we provide a non-metrizable La\v{s}nev space $X$ such that $A_3(X)$ is a $k$-space but $A(X)$ is not a $k$-space.
\end{abstract}

\maketitle
\section{Introduction}
 In 1941, Markov \cite{MA} introduced the concept of the free Abelian topological group $A(X)$ over a Tychonoff
  space $X$. Since then, the free Abelian topological groups have been a
  source of various examples and also an interesting topic of study in the
  theory of topological groups, see \cite{AT2008}.
 For every $n\in\mathbb{N}$, let $A_n(X)$ denote the subspace of $A(X)$ that consists of words of reduced length
at most $n$ with respect to the free basis $X$. Recently, K. Yamada has showed the following two theorems:

\begin{theorem}\cite{Y1993}\label{metrizable k}
Let $X$ be a metrizable space. Then the following statements are equivalent.
\begin{enumerate}
\item $A_{n}(X)$ is a $k$-space for each $n\in\mathbb{N}$.
\item $A_4(X)$ is a $k$-space.
\item either $X$ is locally compact and the set $X'$ of all non-isolated points of $X$ is
separable, or $X'$ is compact.
\end{enumerate}
\end{theorem}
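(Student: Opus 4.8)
We sketch a proof of the above theorem along the cycle $(1)\Rightarrow(2)\Rightarrow(3)\Rightarrow(1)$, in which $(1)\Rightarrow(2)$ is trivial (it is the instance $n=4$ of (1)). Throughout, the topology of $A_n(X)$ is analysed via the natural continuous surjections
\[
 i_n\colon\ \bigl(X\oplus(-X)\oplus\{0\}\bigr)^n\longrightarrow A_n(X),\qquad (x_1^{\varepsilon_1},\dots,x_n^{\varepsilon_n})\longmapsto \varepsilon_1 x_1+\cdots+\varepsilon_n x_n,
\]
whose domain is metrizable because $X$ is. The two standing inputs are: (a) $i_n$ is compact-covering, so every compact $L\subseteq A_n(X)$ lies in $i_n(C)$ for some compact $C$ in the domain, and in particular the support of $L$ has compact closure in $X$; and (b) if $Y$ is closed in $X$, then $A(Y)$ sits in $A(X)$ as a closed topological subgroup, so $A_n(Y)$ is closed in $A_n(X)$ and the $k$-space property, being closed-hereditary, passes from $A_n(X)$ down to $A_n(Y)$.

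For $(3)\Rightarrow(1)$ we treat the two alternatives of (3). If $X'$ is compact, then each $A_m(X')$ is compact metrizable, and one shows directly---using this, the metrizability of $X$, and (a)---that every $A_n(X)$ is a $k$-space; the role of compactness of $X'$ is to let one convert an arbitrary $k$-closed set into a closed one by controlling its behaviour along the compact set $X'$ and countably much of $X\setminus X'$ at a time. If instead $X$ is locally compact with $X'$ separable, one first observes that $X$ splits as a topological sum $X=Y\oplus D$ with $D$ discrete and $Y$ open, separable, locally compact and $\sigma$-compact, hence $Y$ a $k_\omega$-space; since $D$ is discrete, the homomorphism $A(X)\to A(D)$ obtained by collapsing $Y$ to $0$ is continuous with clopen point-preimages, which realizes each $A_n(X)$ as a topological sum---indexed by the discrete set of reduced words over $D$ of length $\le n$---of copies of the $k_\omega$-spaces $A_m(Y)$ with $m\le n$, and a topological sum of $k$-spaces is a $k$-space.

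The heart of the matter is $(2)\Rightarrow(3)$, proved by contraposition: assuming (3) fails, we must produce a subset of $A_4(X)$ that meets every compact set in a closed set but is not closed. Failure of (3) means that either $X'$ is not separable, or $X$ is not locally compact while $X'$ is not compact. In the first case $X'$ contains an uncountable uniformly discrete set $\{a_\alpha:\alpha<\omega_1\}$; choosing sequences $x_{\alpha,k}\to a_\alpha$ in small pairwise disjoint balls, $X$ contains a closed copy of the topological sum $Y_0$ of $\omega_1$ nontrivial convergent sequences, and it suffices to find the bad set in $A_4(Y_0)$. In the second case, working at a point of $X'$ where local compactness fails---which in a metric space produces a closed copy of the ``$\ell^2$-fan'' $W=\{0\}\cup\{e_n/m:n,m\in\N\}\subseteq\ell^2$, for which $W'=\{0\}$ is compact but $W$ is not locally compact at $0$---and, after a routine disjointification, together with a closed copy of a topological sum $\bigoplus_{n\in\N}S_n$ of convergent sequences drawn from the non-compact closed set $X'$, one obtains a closed copy in $X$ of $Z_0=W\oplus\bigoplus_n S_n$, and it suffices to treat $A_4(Z_0)$. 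In each model space one builds $F\subseteq A_4$ by a diagonalization / pressing-down argument out of reduced words such as $a_\alpha-x_{\alpha,k}$, together with one or two ``anchoring'' letters (respectively, words mixing an escaping letter $e_n/m$ with letters of the $S_j$): by (a) a compact set $K$ touches only finitely many of the summands, so $F\cap K$ is closed, whereas $F$ has a limit point $g_0\in A_4\setminus F$ that becomes visible only ``at the uncountable level'' (respectively, only once the fan is mixed with the escaping sequences). Transporting $F$ up to $A_4(X)$ along the closed embedding $A_4(Y_0)\hookrightarrow A_4(X)$ (respectively $A_4(Z_0)\hookrightarrow A_4(X)$) then contradicts the hypothesis that $A_4(X)$ is a $k$-space.

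The main obstacle is precisely this construction, in two stages. First one must pin down the correct short list of ``forbidden'' model subspaces forced by the failure of (3) and verify that one of them really embeds closedly in $X$; the bookkeeping is delicate because the two defects---non-separability of $X'$, and simultaneous failure of local compactness and of compactness of $X'$---may sit at different points of $X$ and must be merged into a single closed subspace. Second, and harder, one must actually exhibit the $k$-closed, non-closed set $F\subseteq A_4$ of each model space and verify the property against \emph{all} compact subsets of $A_4$, not just against convergent sequences; this forces a careful analysis of reduced words of length $\le 4$, and the bound $4$ is essential---two ``anchoring'' letters beyond the two ``moving'' ones leave just enough room to keep $F$ $k$-closed while retaining a limit point outside it, which is why $A_4$, rather than $A_2$ or $A_3$, is the correct threshold in the metrizable setting. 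One should also keep in reserve the supporting fact that $A_n(Y)$ is closed in $A_n(X)$ whenever $Y$ is closed in $X$, since it is invoked repeatedly.
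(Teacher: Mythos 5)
This statement is quoted verbatim from Yamada \cite{Y1993}; the present paper gives no proof of it, so there is no in-paper argument to compare yours with, and I can only measure your sketch against what a proof actually requires. Structurally you are on the right track: $(1)\Rightarrow(2)$ is trivial; your reduction of $\neg(3)$ to the two cases ``$X'$ not separable'' and ``$X$ not locally compact and $X'$ not compact'' is the correct split (compactness of $X'$ implies its separability in the metrizable setting); the model subspaces you extract --- a closed copy of $M_3=\bigoplus_{\alpha<\omega_1}C_\alpha$ in the first case, a metric fan together with a countable sum of convergent sequences in the second --- are the ones that actually occur in Yamada's argument; and the decomposition $X=Y\oplus D$ with $Y$ clopen, locally compact and $\sigma$-compact (hence $k_\omega$) in the locally compact half of $(3)\Rightarrow(1)$ is correct and essentially complete as you state it. Your standing inputs (a) and (b) are also both valid for metrizable $X$.

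The gaps are exactly the two places you yourself label ``the main obstacle,'' and they are not cosmetic. First, the case ``$X'$ compact'' of $(3)\Rightarrow(1)$ is dispatched in one sentence (``one shows directly \dots by controlling its behaviour along the compact set $X'$''); this is one of the genuinely hard halves of Yamada's theorem, and nothing in your sketch indicates how a $k$-closed subset of $A_n(X)$ is actually shown to be closed. Second, in $(2)\Rightarrow(3)$ you never construct the $k$-closed, non-closed subsets of $A_4(M_3)$ and of $A_3$ of the fan-plus-sequences space: ``a diagonalization / pressing-down argument out of reduced words such as $a_\alpha-x_{\alpha,k}$'' names the flavour of the construction but not the construction, and the entire content of the implication lies in exhibiting these sets and verifying $k$-closedness against \emph{all} compact subsets (for which your input (a) is indeed the right tool). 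Note that the first of these two facts is precisely what the present paper itself imports without proof as Lemma~\ref{s-omega-1} (Yamada's Theorem 3.6, that $A_4(M_3)$ is not a $k$-space). So what you have is a correct roadmap whose milestones coincide with Yamada's, but the three load-bearing steps --- the $X'$-compact case of $(3)\Rightarrow(1)$ and the two explicit counterexample sets in $(2)\Rightarrow(3)$ --- are asserted rather than proved.
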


\begin{theorem}\cite{Y1996}\label{metrizable k1}
For the hedgehog space $J(\kappa)$ of spininess $\kappa\geq\aleph_{0}$, the space $A_{n}(J(\kappa))$ is a $k$-space for each $n\in\mathbb{N}$, but $A(J(\kappa))$
is not a $k$-space.
\end{theorem}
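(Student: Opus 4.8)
The statement has two parts. The first is immediate from Theorem~\ref{metrizable k}. Here $J(\kappa)$ is the metrizable hedgehog, that is, $\kappa$ many nontrivial convergent sequences identified at their common limit $0$, so it is metrizable and its set $J(\kappa)'$ of non-isolated points equals $\{0\}$, hence is compact; thus condition~(3) of Theorem~\ref{metrizable k} is satisfied, and that theorem gives that $A_n(J(\kappa))$ is a $k$-space for every $n\in\mathbb N$. Everything therefore hinges on the second part, that $A(J(\kappa))$ is \emph{not} a $k$-space; in one line this is because $J(\kappa)$ fails to be locally compact at $0$ once $\kappa\geq\aleph_0$, and the plan is to convert that failure into a subset which is $k$-closed but not closed.

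Two standard facts about free Abelian topological groups over a metrizable base space will be used: each $A_n(J(\kappa))$ is closed in $A(J(\kappa))$, and every compact subset of $A(J(\kappa))$ is contained in $A_n(J(\kappa))$ for some $n$ — equivalently, reduced length is bounded on compact sets (see \cite{AT2008}). Granting these, it suffices to produce a set $D\subseteq A(J(\kappa))$ such that (i) $D\cap A_n(J(\kappa))$ is closed in $A_n(J(\kappa))$ for every $n$, yet (ii) $D$ is not closed in $A(J(\kappa))$. Indeed, from the two facts (i) forces $D\cap K$ to be closed for every compact $K\subseteq A(J(\kappa))$: writing $K\subseteq A_n(J(\kappa))$, we have $D\cap K=(D\cap A_n(J(\kappa)))\cap K$. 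Hence $D$ is $k$-closed, and together with (ii) this says exactly that $A(J(\kappa))$ is not a $k$-space. Conceptually, such a $D$ witnesses that the group topology of $A(J(\kappa))$ is strictly coarser than the inductive-limit topology of the chain $A_1(J(\kappa))\subseteq A_2(J(\kappa))\subseteq\cdots$.

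The construction of $D$ is the heart of the matter. Let $\{x_\alpha^k:k\in\mathbb N\}$ enumerate the $\alpha$-th spine, so $x_\alpha^k\to 0$ as $k\to\infty$. For a finite set $F$ of spine indices and a large depth one forms reduced words of length about $2|F|$ supported on $\{x_\alpha^k:\alpha\in F\}$ together with a matching set of negatives taken at equally large depth; for $|F|$ fixed, letting the depths tend to infinity makes such words converge to $0$ in $A_{2|F|}(J(\kappa))$, since each of the finitely many summands tends to $0$. One then assembles $D$ out of such words, one reduced-length level at a time, choosing the supporting index sets $F$ and the depths by a simultaneous diagonal/combinatorial argument so that, on the one hand, the words chosen at any fixed level do not accumulate within that level (this secures (i), the easy half), and, on the other hand, every basic neighbourhood of the identity of $A(J(\kappa))$ — such a neighbourhood being determined by finitely many of the Graev-type extensions $\widehat d$ of continuous pseudometrics $d$ on $J(\kappa)$ — catches one of the chosen words, so that $0$, which lies in $A_0(J(\kappa))\subseteq A_n(J(\kappa))$ for all $n$, belongs to $\overline D$ although plainly $0\notin D$ (this secures (ii)). The decisive point is that every continuous pseudometric on $J(\kappa)$ becomes uniformly small near $0$ by continuity there, so that a word supported deeply enough and on few enough spines is automatically inside any prescribed finite collection of such neighbourhoods; but because a single such neighbourhood can already be extremely restrictive, forcing $0$ into $\overline D$ requires genuinely exhausting the full width $\kappa$ of the hedgehog, which is why $\kappa\geq\aleph_0$ is needed and why the combinatorics gets heavier as $\kappa$ grows.

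The main obstacle is precisely this last construction: one must balance the \emph{depth} of the points used in a word (which controls how close it is to the identity) against the \emph{width} of its support (which controls its reduced length) against the non-metrizable neighbourhood filter at the identity, so that no $A_n(J(\kappa))$ catches more than a relatively discrete closed piece of $D$ while every identity neighbourhood still catches a point of $D$. The reduction above, and the level-by-level checks that the chosen words behave as described, are then routine once the right $D$ is in hand.
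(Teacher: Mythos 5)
The first half of your argument is fine once $J(\kappa)$ is read as the hedgehog whose spines are convergent sequences, so that $J(\kappa)'=\{0\}$ is compact and condition (3) of Theorem~\ref{metrizable k} applies. (That reading is in fact forced: for the interval-spined hedgehog one has $J(\kappa)'=J(\kappa)$, which is neither compact nor locally compact, so Theorem~\ref{metrizable k} would make $A_4(J(\kappa))$ fail to be a $k$-space, contradicting the very statement being proved.) Your reduction of the second half to producing a set $D$ with $D\cap A_n(J(\kappa))$ closed for every $n$ but $D$ not closed is also sound, resting on the standard facts that each $A_n(X)$ is closed and that compact subsets of $A(X)$ have bounded reduced length.

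The gap is that $D$ is never constructed. Everything from ``The construction of $D$ is the heart of the matter'' onward describes what $D$ ought to do rather than defining it and verifying anything; you yourself identify the diagonal/combinatorial balancing of depth, support width, and the neighbourhood filter at $0$ as ``the main obstacle'' and then stop, but that obstacle is the entire content of the theorem's second assertion. Two further points. First, even as a plan the closedness check is too weak: ensuring that words chosen at a fixed level do not accumulate within that level does not make $D\cap A_n$ closed, since $D\cap A_n$ contains the words of all levels at most $n$ and these must be kept from accumulating at $0$ inside $A_n$ even though $0\in\overline{D}$ globally; also, basic neighbourhoods of $0$ in $A(X)$ are governed by a single continuous pseudometric, equivalently by a countable sequence of entourages as in Theorem~\ref{yamada-neighborhood}, not by ``finitely many'' Graev extensions, which changes the combinatorics you would need. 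Second, the construction can be bypassed entirely: Corollary~\ref{locally} (from \cite{AOP1989}) states that a metrizable $X$ with $A(X)$ a $k$-space must be locally compact, and $J(\kappa)$ with $\kappa\geq\aleph_{0}$ is not locally compact at its centre, since every neighbourhood of $0$ contains an infinite uniformly discrete set consisting of one point of the same depth from each of infinitely many spines. That yields the second assertion in two lines. The paper itself offers no proof to compare against, as it simply cites \cite{Y1996}, so the burden of the missing construction falls entirely on your write-up.
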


For a metrizable space $X$, it follow from Theorems~\ref{metrizable k} and ~\ref{metrizable k1} that the subspace $A_{4}(X)$ with the $k$-space property implies each $A_{n}(X)$ is a $k$-space, and that each $A_{n}(X)$ is a $k$-space does not imply $A(X)$ is a $k$-space. Therefore, it is natural to ask what the relations of subspaces $A_{4}(X)$, $A_{n}(X)$ and $A(X)$ with the $k$-space property for a non-metrizable space $X$. In this paper, we obtain an unexpected result and show that for any non-metrizable La\v{s}nev space $X$ (i.e. a non-metrizable, closed image of a  metric space), the subspace $A_4(X)$ is a $k$-space if and only if $A(X)$ is a $k$-space. Moreover, we also discuss the relations of subspaces $A_{3}(X)$ and $A_{4}(X)$ with the $k$-space property for a non-metrizable space $X$. Our results give a complementary for some results of K. Yamada's in literature.

\maketitle
\section{Preliminaries}
 In this section, we introduce the necessary notation and terminologies.
  First of all, let $\N$ denote the set of all positive
  integers. For a space $X$, we always denote the sets of all non-isolated points and isolated points of $X$ by $\mbox{NI}(X)$ and $\mbox{I}(X)$ respectively. All spaces are Tychonoff unless stated otherwise. For undefined
  notation and terminologies, the reader may refer to \cite{AT2008},
  \cite{E1989} and \cite{Gr1984}.

  \medskip
  Let $X$ be a topological space and $A \subseteq X$ be a subset of $X$.
  The \emph{closure} of $A$ in $X$ is denoted by $\overline{A}$ and the
  \emph{diagonal} of $X$ is denoted by $\Delta (X)$. Moreover, $A$ is called \emph{bounded} if every continuous real-valued
  function $f$ defined on $A$ is bounded.
  Recall that $X$ is said to have a \emph{$G_{\delta}$-diagonal} if $\Delta(X)$ is a $G_\delta$-set
 in $X \times X$. The space $X$ is called a
  \emph{$k$-space} provided that a subset $C\subseteq X$ is closed in $X$ if
  $C\cap K$ is closed in $K$ for each compact subset $K$ of $X$.
  If there exists
  a family of countably many compact subsets $\{K_{n}: n\in\mathbb{N}\}$ of
  $X$ such that each subset $F$ of $X$ is closed in $X$ provided that
  $F\cap K_{n}$ is closed in $K_{n}$ for each $n\in\mathbb{N}$, then $X$ is called a \emph{$k_{\omega}$-space}. Note that every $k_{\omega}$-space is a $k$-space. A subset $P$ of $X$ is called a
  \emph{sequential neighborhood} of $x \in X$, if each
  sequence converging to $x$ is eventually in $P$. A subset $U$ of
  $X$ is called \emph{sequentially open} if $U$ is a sequential neighborhood of
  each of its points. The space $X$ is called a \emph{sequential  space} if each
  sequentially open subset of $X$ is open. The space $X$ is said to be {\it Fr\'{e}chet-Urysohn} if, for
each $x\in \overline{A}\subset X$, there exists a sequence
$\{x_{n}\}$ such that $\{x_{n}\}$ converges to $x$ and $\{x_{n}:
n\in\mathbb{N}\}\subset A$. Let $\kappa$ be an infinite cardinal.
  For each $\alpha\in\kappa$, let $T_{\alpha}$ be a sequence converging to
  $x_{\alpha}\not\in T_{\alpha}$. Let $T:=\bigoplus_{\alpha\in\kappa}(T_{\alpha}\cup\{x_{\alpha}\})$ be the topological sum of $\{T_{\alpha}
  \cup \{x_{\alpha}\}: \alpha\in\kappa\}$. Then
  $S_{\kappa} :=\{x\}  \cup \bigcup_{\alpha\in\kappa}T_{\alpha}$
  is the quotient space obtained from $T$ by
  identifying all the points $x_{\alpha}\in T$ to the point $x$.

  \medskip
  Let $\mathscr P$ be a family of subsets of $X$. Then, $\mathscr P$ is called a {\it $k$-network}
  \cite{O1971} if for every compact subset $K$ of $X$ and an arbitrary open set
  $U$ containing $K$ in $X$ there is a finite subfamily $\mathscr {P}^{\prime}\subseteq
  \mathscr {P}$ such that $K\subseteq \bigcup\mathscr {P}^{\prime}\subseteq U$. Recall that a space $X$ is an \emph{$\aleph$-space} (resp.
  {\it $\aleph_{0}$-space}) if
  $X$ has a $\sigma$-locally finite (resp. countable) $k$-network. Recall that a topological space $X$ is said to be
  \emph{La\v{s}nev} if it is the closed image of some metric space. The following two well known facts about the La\v{s}nev spaces shall be used in this paper.

 \medskip
 {\bf Fact 1:} A La\v{s}nev space is metrizable if it contains no closed copy of $S_\omega$.

 \medskip
 {\bf Fact 2:} A La\v{s}nev space is an $\aleph$-space if it contains no closed copy of $S_{\omega_1}$.

  \medskip
  Let $X$ be a non-empty Tychonoff space. Throughout this paper, $-X: =\{-x: x\in X\}$, which is just a copy of
  $X$. For every $n\in\mathbb{N}$, $A_{n}(X)$ denotes the
  subspace of $A(X)$ that consists of all words of reduced length at
  most $n$ with respect to the free basis $X$. Let $0$ be the neutral element of $A(X)$ (i.e., the empty
  word). For every $n\in\N$ and an element
  $x_{1}+x_{2}+\cdots +x_{n}$ is also called a {\it form} for $(x_{1}, x_{2},
  \cdots, x_{n})\in(X\bigoplus -X\bigoplus\{0\})^{n}$. This word $g$ is
  called {\it reduced} if it does not contains $0$ or any pair of
  consecutive symbol of the form $x-x$. It follows
  that if the word $g$ is reduced and non-empty, then it is different
  from the neutral element $0$ of $A(X)$. In particular, each element
  $g\in A(X)$ distinct from the neutral element can be uniquely written
  in the form $g=r_{1}x_{1}+r_{2}x_{2}+\cdots +r_{n}x_{n}$, where
  $n\geq 1$, $r_{i}\in\mathbb{Z}\setminus\{0\}$, $x_{i}\in X$, and
  $x_{i}\neq x_{j}$ for $i\neq j$, and the {\it support}
  of $g=r_{1}x_{1}+ r_{2}x_{2}+\cdots +r_{n}x_{n}$ is defined as
  $\mbox{supp}(g) :=\{x_{1}, \cdots, x_{n}\}$. Given a subset $K$ of
  $A(X)$, we put $\mbox{supp}(K):=\bigcup_{g\in K}\mbox{supp}(g)$. For every $n\in\mathbb{N}$, let $$i_n: (X\bigoplus -X
  \bigoplus\{0\})^{n} \to A_n(X)$$ be the natural mapping defined by
  $$i_n(x_1, x_2, ... x_n)= x_1+x_2+...+x_n$$
  for each $(x_1, x_2, ... x_n) \in (X\bigoplus -X
  \bigoplus\{0\})^{n}$.

\maketitle
\section{main results}
First, we give a characterization of a non-metrizable La\v{s}nev space $X$ such that $A_{4}(X)$ is a $k$-space, see Theorem~\ref{k-space-characterization}. In order to obtain this result, we first prove some propositions and lemmas.

\begin{proposition}\label{t1}
If $A(X)$ is a sequential space, then either $X$ is a discrete space or $A(X)$ contains a closed copy of $S_\omega$.
\end{proposition}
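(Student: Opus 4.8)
The plan is to argue by contraposition: assuming $X$ is not discrete, I will produce a closed copy of $S_\omega$ inside $A(X)$, using the fact that $A(X)$ is sequential to control how sequences converge. Since $X$ is not discrete, it contains a non-isolated point $a$, so there is a sequence $\{x_n : n\in\N\}$ in $X\setminus\{a\}$ converging to $a$. The natural candidate for an $S_\omega$ inside $A(X)$ will be built from the countably many sequences obtained by translating: for instance, for each $m\in\N$ consider the sequence $\{x_n - x_m : n\in\N\}$ (or $\{x_n - x_m : n > m\}$), which converges to $a - x_m$ in $A(X)$ by continuity of the group operations. More promisingly, one forms sequences whose limits are all identified: fix the points $x_m$ and look at how the map $i_2$ behaves on $\{x_n\} \times (-\{x_m\})$; the issue is that the limits $a - x_m$ are distinct, so one must instead arrange the $S_\omega$ with apex $0$ by using sequences like $\{x_n - x_{k_n}\}$ that converge to $0$ along suitable subsequences, or build the fan at a translate of $a$.

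The key steps, in order, would be: (1) choose $a\in \mathrm{NI}(X)$ and a nontrivial convergent sequence $S = \{x_n\} \to a$ in $X$ with all $x_n$ distinct and different from $a$; (2) identify an explicit countable family $\{L_m : m\in\N\}$ of nontrivial sequences in $A(X)$, each converging to a common point $p$ (the apex), with the $L_m$ pairwise disjoint and disjoint from $\{p\}$ — the candidate is to take $p = 0$ and $L_m = \{x_n - x_m : n \in \N, n\neq m\}$, whose closure in $A_2(X)$ adds exactly the points $a - x_m$ and (as $n$ ranges) accumulates, but to get a clean fan one passes to $L_m = \{ (x_{n} - x_m) : n \}$ and notes $\overline{L_m} \setminus L_m = \{a - x_m\}$, then takes a further diagonal sequence; (3) show that $Y := \{p\} \cup \bigcup_m L_m$ with the subspace topology from $A(X)$ is exactly $S_\omega$, i.e.\ that a set meeting each $L_m\cup\{p\}$ in a sequential neighborhood of $p$ is a neighborhood of $p$ in $Y$ — here one uses that a set $C$ with $p\notin C$ must be closed, which requires showing $C$ is sequentially closed and invoking sequentiality of $A(X)$ to promote this to closedness; (4) verify $Y$ is closed in $A(X)$: any point of $\overline{Y}\setminus Y$ would be the limit of a sequence in $Y$ (using sequentiality again), and one checks via the support/length structure of $A_2(X)$ that the only sequential limits of sequences drawn from $Y$ lie in $Y$ itself.

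The main obstacle is step (3)–(4): controlling the subspace topology on $Y$ and its closure inside all of $A(X)$, not merely inside $A_2(X)$. The words in $Y$ have length $\le 2$, but a sequence in $Y$ could in principle converge in $A(X)$ to a word of length $0$ or $1$ that is not in $Y$, and one must rule this out — the standard tool is that $A_n(X)$ is closed in $A(X)$ and that on $A_2(X)$ the topology is governed by $i_2$ together with the behavior of $\mathrm{supp}$; a convergent sequence of reduced length-$2$ words $x_{n} - x_{m_n}$ either has the second coordinates eventually constant (giving a limit $a - x_m \notin Y$ unless we excised those) or has $m_n \to \infty$, forcing (by the $k$-network / La\v{s}nev-free structure of $X$, or directly from the definition of the topology on $A(X)$ via seminorms) the sequence to fail to converge at all unless it converges to $0$. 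Making this dichotomy precise, and choosing the family $\{L_m\}$ so that the excised limit points $a - x_m$ do not sabotage closedness, is the delicate part; I expect the cleanest route is to first reduce to the case where $\{a\}\cup S$ is a closed copy of a convergent sequence in $X$, then transport a known closed-$S_\omega$ construction (e.g.\ the one implicit in Yamada's analysis of $A_2$ and $A_3$) through the group operations, checking closedness by a direct support argument.
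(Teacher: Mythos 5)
Your plan stalls precisely where the one real idea of the proof is needed: the choice of the legs of the fan. Every candidate you propose ($\{x_n-x_m\}$, $\{x_n-x_{k_n}\}$, diagonal refinements thereof) lives inside $A_2(X)$, and a fan of bounded word length cannot work in general. For instance, if $X$ is a single nontrivial convergent sequence together with its limit point, then each $A_n(X)$ is a continuous Hausdorff image of the compact metrizable space $(X\bigoplus -X\bigoplus\{0\})^{n}$, hence compact metrizable, so no $A_n(X)$ contains any copy of $S_\omega$ at all --- yet $A(X)$ is sequential and the proposition must still produce a closed $S_\omega$ there. The obstruction you half-identify in step (3) is in fact fatal for your candidates: with legs of bounded length converging to a common apex, a diagonal sequence choosing one point from each leg can still converge to the apex (e.g.\ $x_{n_m}-x_{n_m+m}\rightarrow 0$ whenever $n_m\rightarrow\infty$), so the subspace you build does not carry the topology of $S_\omega$, and no amount of support bookkeeping inside $A_2(X)$ repairs this.

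The paper's proof resolves the difficulty with a short device absent from your proposal: it places the legs in distinct length strata, setting $L_k:=\{kx_n-kx: n\in\mathbb{N}\}\subset A_{2k}(X)\setminus A_{2k-1}(X)$, each of which converges to $0$ by continuity of the group operations. Any sequence in $L:=\{0\}\cup\bigcup_k L_k$ meeting infinitely many $L_k$'s then has a subsequence whose intersection with each $A_n(X)$ is finite, and such a set is closed and discrete in $A(X)$ by Corollary 7.4.3 of \cite{AT2008}; together with the sequentiality of $A(X)$ this shows at once that $L$ is closed and that no ``diagonal'' sequence converges, i.e.\ that $L$ is a closed copy of $S_\omega$. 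Your steps (3)--(4), and the appeal to transporting an unspecified ``known closed-$S_\omega$ construction,'' do not substitute for this; as written the proposal is a plan with its central construction unresolved rather than a proof.
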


\begin{proof}
Assume on the contrary that $X$ is not a discrete space. Since $A(X)$ is sequential and $X$ is closed in $A(X)$, the space $X$ is sequential, hence there are a point $x\in X$ and a non-trivial sequence $\{x_n: n\in \mathbb{N}\}\subset X\setminus\{x\}$ such that $\{x_n: n\in \mathbb{N}\}$ converges to $x$. For each $k\in\mathbb{N}$, let $L_k: =\{kx_n-kx: n\in \mathbb{N}\}$; then $L_k\subset A_{2k}(X)\setminus A_{2k-1}(X)$ and $L_k\to 0$ as $k\rightarrow\infty$. Let $L: =\bigcup\{L_k: k\in \mathbb{N}\}\cup\{0\}$. Next it suffices to show that $L$ is a closed copy of $S_\omega$.

Suppose $L$ is not closed in $A(X)$ or contains no copy of $S_\omega$. Then, since $A(X)$ is sequential, there is a sequence $\{y_n: n\in\mathbb{N}\}\subset L$ such that $y_n\to y\notin L$ and $\{y_n\}$ meets infinitely many $L_k$'s. This fact implies that there is a subsequence $K\subset \{y_n: n\in\mathbb{N}\}$ such that $K\cap A_n(X)$ is finite, hence $K$ is discrete by \cite[Corollary 7.4.3]{AT2008}, which is a contradiction. Therefore, $L$ is a closed copy of $S_\omega$.
\end{proof}

The following lemma was proved in \cite {LL2010}.

\begin{lemma}\label{l1}
\cite {LL2010} Suppose $X$ is a sequential topological group, then either $X$ contains no closed copy of $S_\omega$ or every closed first-countable subspace of $X$ is locally countably compact.
\end{lemma}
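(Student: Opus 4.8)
The plan is to prove the contrapositive in the sharp form: \emph{if $X$ is a sequential topological group containing a closed copy of $S_\omega$, then every closed first-countable subspace of $X$ is locally countably compact.} So I would assume, toward a contradiction, that $X$ is a sequential topological group which contains a closed copy $F$ of $S_\omega$ and also contains a closed, first-countable subspace $Y$ that fails to be locally countably compact at some point $y_0\in Y$; the goal is to produce a subset of $X$ that is sequentially open but not open. Write $F=\{z\}\cup\bigcup_{m\in\N}S_m$, where $z$ is the unique non-isolated point of $F$ and each spine $S_m=\{a_{m,k}:k\in\N\}$ is a nontrivial sequence with $a_{m,k}\to z$ as $k\to\infty$ and all the $a_{m,k}$ pairwise distinct. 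Using homogeneity of the group I would translate so that both $z$ and $y_0$ become the neutral element $0$; then $F$ and $Y$ are still closed, $0$ is the apex of $F$, and $Y$ is not locally countably compact at $0$.

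Next I would extract from $Y$ a family of closed discrete sets shrinking to $0$. Using first-countability and regularity, fix a decreasing open neighbourhood base $\{V_m:m\in\N\}$ of $0$ in $Y$ with $\overline{V_{m+1}}\subseteq V_m$ (closures in $Y$). Since $Y$ is not locally countably compact at $0$, each $\overline{V_{m+1}}$ is a neighbourhood of $0$ that is not countably compact, hence contains a countably infinite set $D_m=\{d_{m,k}:k\in\N\}$ with distinct entries, not containing $0$, that is closed and discrete in $Y$. Two facts I would record: because $Y$ is closed in $X$, every $D_m$ is closed and discrete \emph{in $X$}, so any convergent sequence whose terms all lie in one fixed $D_m$ is eventually constant; and because $D_m\subseteq V_m$ and the $V_m$ shrink to $0$, for every neighbourhood $U$ of $0$ in $X$ one has $\bigcup_{m\ge m_0}D_m\subseteq U$ once $m_0$ is large, so $d_{m_j,k_j}\to 0$ whenever $m_j\to\infty$ (regardless of the $k_j$). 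I would also record the only property of $S_\omega$ needed: a convergent sequence in $S_\omega$ either is eventually constant or converges to the apex, and one converging to the apex cannot have spine-indices tending to infinity, since by choosing a suitable function $f$ one finds a basic neighbourhood $\{z\}\cup\{a_{n,k}:k\ge f(n)\}$ of the apex avoiding the whole sequence; hence \emph{every} convergent sequence in $S_\omega$ has bounded spine-index.

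Now define $E:=\{d_{m,k}+a_{m,k}:m,k\in\N\}\setminus\{0\}\subseteq X$. To see $E$ is not closed, given any neighbourhood $U$ of $0$ choose a symmetric neighbourhood $W$ with $W+W\subseteq U$; choosing $m$ large enough that $D_m\subseteq W$ and then $k$ large enough that $a_{m,k}\in W$ (infinitely many such $k$, of which only finitely many can give $d_{m,k}+a_{m,k}=0$, since $a_{m,k}\to 0$ while $-D_m$ is closed discrete and avoids $0$) produces a point of $E\cap U$, so $0\in\overline{E}\setminus E$. To see $E$ is sequentially closed, take $g_j=d_{m_j,k_j}+a_{m_j,k_j}$ in $E$ with $g_j\to p$ and pass to subsequences. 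If the $m_j$ were unbounded, arrange $m_j\to\infty$; then $d_{m_j,k_j}\to 0$, so $a_{m_j,k_j}=g_j-d_{m_j,k_j}\to p$, and since $F$ is closed this is a convergent sequence in $F\cong S_\omega$ with spine-indices tending to infinity --- impossible. So the $m_j$ are bounded; assume $m_j=m$ is constant. If the $k_j$ were unbounded, arrange $k_j\to\infty$; then $a_{m,k_j}\to 0$, so $d_{m,k_j}=g_j-a_{m,k_j}\to p$, forcing $(d_{m,k_j})$ to be eventually constant in the closed discrete set $D_m$, contradicting $k_j\to\infty$ and distinctness. So the $k_j$ are bounded, $(g_j)$ is eventually a fixed element of $E$, and $p\in E$. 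Hence $X\setminus E$ is sequentially open but not open, contradicting sequentiality of $X$.

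I expect the only delicate point to be the extraction step for $Y$ --- arranging that the closed discrete sets $D_m$ lie inside the shrinking neighbourhoods $V_m$ while staying closed and discrete in the ambient group $X$, which is what makes them "converge to $0$ collectively" --- together with the routine hygiene of making all chosen points distinct and discarding the finitely many pairs with $d_{m,k}+a_{m,k}=0$. Once that setup is in place, the verification that $E$ is sequentially closed is exactly the short case analysis above, driven entirely by the two recorded facts about closed discrete sets and about convergent sequences in $S_\omega$.
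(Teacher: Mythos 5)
The paper does not prove this lemma at all: it is quoted verbatim from \cite{LL2010}, so there is no in-paper argument to compare yours against. Judged on its own, your proof is correct and is essentially the standard argument for results of this type (and, as far as I can tell, the same construction used in \cite{LL2010}): translate the apex of the $S_\omega$-copy and the bad point of the first-countable subspace to the identity, extract closed discrete sets $D_m\subseteq V_m$ witnessing failure of countable compactness, and show that $E=\{d_{m,k}+a_{m,k}\}\setminus\{0\}$ is sequentially closed but has $0$ in its closure. I checked the two points where the argument could slip: (i) the sets $D_m$ really are closed and discrete in all of $Y$ (hence in $X$), since a countably infinite set with no accumulation point in the closed set $\overline{V_{m+1}}$ can have none in $Y$ either; and (ii) the $S_\omega$ fact should be phrased as ``a convergent sequence cannot have spine-indices with $m_j\to\infty$'' (which is exactly what your Case 1 needs), obtained by the diagonal choice of $f$; the blanket statement ``bounded spine-index'' also holds after passing to a monotone subsequence, but it is the former form you actually use. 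Two cosmetic remarks: the lemma is stated for general topological groups, so the additive notation should be read multiplicatively ($a=d^{-1}g$, $d=ga^{-1}$) --- nothing in your argument uses commutativity --- and in the last step of Case 2 you only need $p\in E$, which follows from Hausdorffness once the sequence takes finitely many values, not genuine eventual constancy. Neither affects correctness.
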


By Lemma~\ref{l1}, we can show the following proposition.

\begin{proposition}\label{locally-countably-compact}
Let $X$ be a first-countable space. If $A(X)$ is sequential, then $X$ is locally countably compact.
\end{proposition}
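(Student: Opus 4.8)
The plan is to combine Proposition~\ref{t1} with Lemma~\ref{l1}. First I would observe that if $A(X)$ is sequential, then $A(X)$ is a sequential topological group, so Lemma~\ref{l1} applies to it: either $A(X)$ contains no closed copy of $S_\omega$, or every closed first-countable subspace of $A(X)$ is locally countably compact. I then split into the two cases dictated by Proposition~\ref{t1}.

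If $X$ is discrete, then $X$ is certainly locally countably compact (every point is isolated, hence has the compact neighborhood $\{x\}$), so we are done immediately. So assume $X$ is not discrete. Then by Proposition~\ref{t1}, $A(X)$ contains a closed copy of $S_\omega$. Therefore the first alternative of Lemma~\ref{l1} fails, and we conclude that every closed first-countable subspace of $A(X)$ is locally countably compact. Now $X$ is first-countable by hypothesis, and $X$ is closed in $A(X)$ (it is the free basis), so $X$ is a closed first-countable subspace of $A(X)$; hence $X$ is locally countably compact, as required.

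The step that requires the most care is verifying the hypotheses needed to invoke the two cited results cleanly: that $X$ embeds as a \emph{closed} subspace of $A(X)$ (standard for free Abelian topological groups, and already used implicitly in the proof of Proposition~\ref{t1}), and that sequentiality of $A(X)$ is inherited by $X$ so that Proposition~\ref{t1} can be applied at all. Neither is a genuine obstacle — the former is a classical fact and the latter follows since a closed subspace of a sequential space is sequential — but one should state them explicitly. Beyond that, the argument is a short formal chaining of the two preceding results, so there is no substantial calculation to carry out.
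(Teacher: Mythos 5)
Your proof is correct and follows essentially the same route as the paper: the discrete case is handled trivially, and otherwise Proposition~\ref{t1} yields a closed copy of $S_\omega$ in $A(X)$, so Lemma~\ref{l1} forces every closed first-countable subspace of $A(X)$ — in particular $X$ itself — to be locally countably compact. The only minor quibble is that Proposition~\ref{t1} needs only the hypothesis that $A(X)$ is sequential (which is given), so no separate inheritance argument for $X$ is required to invoke it.
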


\begin{proof}
If $X$ is discrete, then it is obvious that $X$ is locally countably compact. Assume that $X$ is not discrete, then it follows from Proposition~\ref{t1} that $A(X)$ contains a closed copy of $S_\omega$. Therefore, by Lemma ~\ref{l1}, every first-countable closed subspace of $A(X)$ is locally countably compact. Since $X$ is first-countable and closed in $A(X)$, the space $X$ is locally countably compact.
\end{proof}

By Proposition~\ref{locally-countably-compact} and the following lemma, we have Corollary~\ref{locally}, which was proved in \cite{AOP1989}. The concept of stratifiable space can be seen in \cite{B1966}.

\begin{lemma}\cite{S1993}\label{stratifiable k}
Let $X$ be a stratifiable space. Then $A(X)$ is also a stratifiable space.
\end{lemma}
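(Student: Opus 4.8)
\section*{Proof proposal for Lemma~\ref{stratifiable k}}

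The plan is to establish that stratifiability is inherited by the free Abelian topological group $A(X)$ directly from a stratifiable structure on $X$, by producing an explicit stratification (or $g$-function characterization) on $A(X)$ built from one on $X$. The cleanest route is to work through the monotone normality / $g$-function formulation of stratifiability, since free topological groups are naturally described via their universal mapping property rather than by explicit bases, and $g$-function arguments transfer well across the canonical maps $i_n$ defined in the Preliminaries.

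First I would recall a workable characterization: a space $Z$ is stratifiable if and only if one can assign to each pair $(n,U)$, with $n\in\N$ and $U$ open in $Z$, an open set $U_n$ so that $\overline{U_n}\subseteq U$, $U=\bigcup_{n}U_n$, and $U\subseteq V$ implies $U_n\subseteq V_n$. My strategy is to first verify that each finite power $(X\bigoplus -X\bigoplus\{0\})^{n}$ is stratifiable: the topological sum $X\bigoplus -X\bigoplus\{0\}$ of stratifiable spaces is stratifiable, and finite (indeed countable) products of stratifiable spaces are stratifiable, which are standard closure properties of the class. Then I would push the stratification through the natural quotient-type map $i_n$ onto $A_n(X)$, and finally assemble the levels $A_n(X)$ into a stratification of all of $A(X)$.

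The main obstacle will be the assembly step: $A(X)=\bigcup_{n}A_n(X)$ carries the inductive-limit-type topology, and stratifiability is \emph{not} in general preserved by arbitrary quotients or by countable unions of closed subspaces, so I cannot simply glue the level stratifications. The delicate point is to show that the group operation and the retraction structure relating $A_n(X)$ to $A_{n+1}(X)$ are compatible enough that a coherent $g$-function can be defined globally — concretely, that the stratifying open sets chosen at level $n+1$ restrict correctly to those at level $n$. Here I would exploit the algebraic fact that $A_n(X)$ is a retract-like, closed subspace of $A(X)$ together with the monotonicity built into the $g$-function, using the homogeneity of the topological group to reduce neighborhood assignments near an arbitrary word $g$ to assignments near the neutral element $0$.

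Since this lemma is quoted from \cite{S1993} and used only as an external input, I would in practice cite that construction rather than reproduce it; but the self-contained argument above indicates why it holds. The one genuinely non-routine ingredient is the verification that the canonical maps $i_n$ are compatible with the chosen $g$-functions in a way that survives passage to the inductive limit, and that is where I would concentrate the detailed work.
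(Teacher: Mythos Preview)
The paper does not prove this lemma at all: it is stated with the citation \cite{S1993} and used as a black box, which is exactly what you yourself recommend doing in your final paragraph. So on the level of ``matching the paper's proof,'' simply citing Sipacheva is the correct move.

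That said, the self-contained sketch you offer does have a genuine gap, and it is precisely the one you flag but do not close. Stratifiability is not preserved by quotient maps, and the maps $i_n:(X\oplus -X\oplus\{0\})^n\to A_n(X)$ are in general only continuous surjections (closedness is known for metrizable $X$, not for arbitrary stratifiable $X$), so ``pushing the stratification through $i_n$'' is not a step one can carry out without a new idea. Likewise, a countable increasing union of closed stratifiable subspaces need not be stratifiable, and appealing to ``homogeneity of the group to reduce to $0$'' does not by itself produce the coherent monotone assignment you need across all levels simultaneously. Sipacheva's actual argument is substantially more delicate: it builds the stratification of $A(X)$ directly from a continuous quasi-metric-type structure on $X$ (via Graev-extension and seminorm techniques tied to the universal uniformity), rather than by assembling level-wise stratifications of the $A_n(X)$. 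Your outline does not contain that ingredient, so as a standalone proof it would not go through.
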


\begin{corollary}\cite{AOP1989}\label{locally} If $X$ is a metrizable space and $A(X)$ is a $k$-space, then $X$ is locally compact.
\end{corollary}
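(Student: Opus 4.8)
The plan is to deduce Corollary~\ref{locally} by combining Proposition~\ref{locally-countably-compact} with Lemma~\ref{stratifiable k}, plus standard metrization facts. First I would observe that a metrizable space is first-countable, so Proposition~\ref{locally-countably-compact} is directly applicable once we know $A(X)$ is sequential; since every $k$-space that is also, say, stratifiable (hence, in particular, every stratifiable $k$-space) is sequential, I would invoke Lemma~\ref{stratifiable k}: because $X$ metrizable implies $X$ stratifiable, $A(X)$ is stratifiable, and a stratifiable $k$-space is sequential. Hence $A(X)$ is sequential, and Proposition~\ref{locally-countably-compact} yields that $X$ is locally countably compact.

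The remaining step is to upgrade "locally countably compact" to "locally compact" using metrizability. In a metrizable space, countably compact subsets are compact (countable compactness coincides with compactness in metric spaces); therefore a point with a countably compact neighbourhood has a neighbourhood whose closure is compact. More precisely, if $x\in X$ has an open neighbourhood $U$ with $\overline{U}$ countably compact, then $\overline{U}$ is compact, so $x$ has a relatively compact neighbourhood, i.e. $X$ is locally compact at $x$. Since $x$ was arbitrary, $X$ is locally compact, completing the proof.

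The only point that needs a little care — and the place I would expect a referee to want a citation — is the implication "stratifiable $k$-space $\Rightarrow$ sequential". This is where Lemma~\ref{stratifiable k} does its work: stratifiable spaces have countable pseudocharacter and much more, and for such spaces the $k$-space property and sequentiality coincide (indeed stratifiable spaces are monotonically normal and have a $\sigma$-closure-preserving $k$-network, so a stratifiable $k$-space is sequential — this is classical, see Gruenhage's handbook chapter). Alternatively, one can bypass stratifiability entirely: $A(X)$ for metrizable $X$ need not be metrizable, but one can cite directly that $A(X)$ over a metrizable space is an $\aleph$-space (stratifiable spaces are $\aleph$-spaces when they have the requisite network), and an $\aleph$-space that is a $k$-space is sequential. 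Either route is routine; the substantive content of the corollary is entirely carried by Proposition~\ref{locally-countably-compact}, which in turn rests on Lemma~\ref{l1}.
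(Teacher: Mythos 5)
Your proposal is correct and follows essentially the same route as the paper: apply Lemma~\ref{stratifiable k} to get that $A(X)$ is stratifiable, conclude that the stratifiable $k$-space $A(X)$ is sequential (the paper justifies this elsewhere via the $G_\delta$-diagonal), invoke Proposition~\ref{locally-countably-compact}, and finish with the fact that countably compact subsets of metrizable spaces are compact. Your extra care over the step ``stratifiable $k$-space $\Rightarrow$ sequential'' is warranted, since the paper's own wording omits the explicit use of the $k$-space hypothesis there.
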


\begin{proof}
Since $X$ is metrizable, it follows from Lemma~\ref{stratifiable k} that $A(X)$ is stratifiable, hence it is a sequential space. By Proposition~\ref{locally-countably-compact}, the space $X$ is locally countably compact, then it is locally compact since every countably compact subset of a metrizable space is compact.
\end{proof}

Next we recall two spaces $M_{1}$ and $M_{3}$, which were introduced in \cite{Y1993}.

Let $M_1: =\{x\}\cup (\bigcup\{X_i: i\in \mathbb{N}\})$ such that each $X_i$ is an infinite, countable, discrete and open subspace of $M_1$ and the family $\{V_k=\{x\}\cup \bigcup \{X_i: i\geq k\}: k\in \mathbb{N}\}$ is a neighborhood base of the point $x$ in $M_1$. Let
$M_3: =\bigoplus\{C_\alpha: \alpha<\omega_1\}$, where, for each $\alpha<\omega_1$, the set $C_\alpha: =\{x(n, \alpha): n\in \mathbb{N}\}\cup\{x_\alpha\}$ with $x(n, \alpha)\to x_\alpha$ as $n\rightarrow\infty$.

\begin{lemma}\label{l-sequential-a}
Let $m_{0}$ and $n_{0}$ be two natural numbers. If $A_{m_{0}}(X)$ contains a closed copy of $S_\omega$ and $A_{n_{0}}(X)$ contains a closed copy of the space $M_1$, then $A_{m_{0}+n_{0}}(X)$ is not a sequential space.
\end{lemma}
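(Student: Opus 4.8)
The plan is to build, inside $A_{m_0+n_0}(X)$, a sequentially open set that fails to be open, using the two given closed copies as raw material. Let $S\subset A_{m_0}(X)$ be a closed copy of $S_\omega$ with apex $0$ (translating if necessary we may assume the non-isolated point of $S$ is the neutral element), written as $S=\{0\}\cup\bigcup_{j}\{s_{j,k}:k\in\mathbb N\}$ where for each fixed $j$ the sequence $(s_{j,k})_k$ converges to $0$. Let $M\subset A_{n_0}(X)$ be a closed copy of $M_1=\{x\}\cup\bigcup_i X_i$; since each $X_i$ is an infinite countable discrete open subset accumulating only at $x$, enumerate $X_i=\{y_{i,k}:k\in\mathbb N\}$. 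The idea is that the ``product-like'' set $\{s_{j,k}+y_{i,k}\}$ inside $A_{m_0+n_0}(X)$ will exhibit the classical $S_\omega\times M_1$ obstruction to sequentiality: $M_1$ is not locally compact at $x$, and $S_\omega$ has a convergence structure that is too fine, so any sequence landing in this set and converging must be eventually in one ``column'', but the union of columns is not open.

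Concretely, I would proceed as follows. First, fix the diagonal set $D:=\bigcup_{i\in\mathbb N}\bigcup_{j\in\mathbb N}\{\,s_{j,k}+y_{i,k}:k\in\mathbb N\,\}$ together with its ``axes'': the copy of $S_\omega$ sitting at $y$'s going to $x$ is not quite right, so instead I take the candidate non-open set to be $A_{m_0+n_0}(X)\setminus W$ for a suitable $W$ built from $D$. The cleanest route is to mimic Yamada's argument: show that $E:=\{x\}\cup S$ viewed appropriately, or rather the set $F:=\{\,s_{j,k}+y_{i,k}:i,j,k\in\mathbb N\,\}\cup(\{0\}\times\{x\}\text{-part})$ is sequentially closed in $A_{m_0+n_0}(X)$ but not closed. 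Sequential closedness is where the two hypotheses get used: any convergent sequence $(g_n)$ in $F$ either has bounded ``$k$-index'' in which case it lies in finitely many columns each of which is a closed copy of a convergent sequence in $X$ (or in $S$), hence the limit is captured; or it has unbounded $k$-index, and then one argues as in Proposition~\ref{t1} via \cite[Corollary 7.4.3]{AT2008} that a subsequence meeting each $A_n(X)$ finitely is discrete, a contradiction. Non-closedness comes from the fact that $0+x=x\in\overline{F}$: for each $i$, $s_{i,k}+y_{i,k}\to 0+x$? — one must choose the pairing so that a genuine limit point outside $F$ appears, exploiting that $x$ is a non-isolated point of $M_1$ with non-locally-compact neighborhoods.

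The main obstacle I anticipate is the interaction of the reduced-word structure with the two embedded copies: one needs the sum $s+y$ (for $s\in S\setminus\{0\}$, $y\in M\setminus\{x\}$) to have reduced length exactly $m_0+n_0$ and, crucially, the assignment $(s,y)\mapsto s+y$ restricted to the relevant countable sets must be a topological embedding onto a closed subset of $A_{m_0+n_0}(X)$, or at least the relevant subsets must have controlled closure. This is where supports must be arranged to be ``sufficiently disjoint'': one should first replace $S$ and $M_1$ by copies whose supports meet $X$ in disjoint sets (possible because both are closed and one can shift along the free basis), so that $i_{m_0+n_0}$ restricted to $S\times M$ is injective on words with disjoint support and the subspace topology is the product topology on the pieces that matter. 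I would isolate this as a preliminary reduction, then carry out the sequential-closed-but-not-closed verification, and finally invoke the fact that a sequential space cannot contain a non-closed sequentially closed subset to conclude $A_{m_0+n_0}(X)$ is not sequential.
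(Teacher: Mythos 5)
Your overall strategy (manufacture a ``product-like'' set from the two copies that is sequentially closed but not closed) is the right one, and it is the paper's strategy too; but as written the construction has a genuine gap in the choice of pairing, and the mechanism you offer for sequential closedness cannot work. Your set $F=\{s_{j,k}+y_{i,k}: i,j,k\in\mathbb{N}\}$ matches the \emph{within-branch} index $k$ of $S_\omega$ with the \emph{within-column} index $k$ of $M_1$ and leaves $i,j$ free. This set is not sequentially closed: fix $j$ and put $i=k$; then $s_{j,k}\to 0$ (convergence along one branch of $S_\omega$) and $y_{k,k}\to x$ (any sequence whose column index tends to infinity converges to $x$ in $M_1$, since every neighborhood of $x$ contains all but finitely many whole columns), so by continuity of addition $s_{j,k}+y_{k,k}\to x\notin F$. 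Moreover, your fallback argument for sequential closedness --- pass to a subsequence meeting each $A_n(X)$ finitely and invoke \cite[Corollary 7.4.3]{AT2008} as in Proposition~\ref{t1} --- is unavailable here, because every element of your set already lies in the single bounded piece $A_{m_0+n_0}(X)$; that corollary only bites when the reduced lengths are unbounded.

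The fix is to pair the indices \emph{transversally}, which is exactly what the paper does: writing the copy of $S_\omega$ as $\{x_0\}\cup\{x(n,m)\}$ with $x(n,m)\to x_0$ as $m\to\infty$ for fixed $n$, and the copy of $M_1$ as $\{y_0\}\cup\{y(n,m)\}$ with $n$ the column index, take the diagonal $H=\{x(n,m)-y(n,m): n,m\in\mathbb{N}\}$. A convergent sequence from $H$ would, by subtracting one summand (continuity of the group operations --- no embedding or disjoint-support hypothesis is needed, so your preliminary reduction is superfluous and in fact not available, since the copies live in $A_{m_0}(X)$ and $A_{n_0}(X)$ rather than in $X$), force either $\{y(n(k),m(k))\}$ to converge with $n(k)$ constant (impossible: each column of $M_1$ is closed discrete) or $\{x(n(k),m(k))\}$ to converge with the $n(k)$ taking each value finitely often (impossible: such a set is closed discrete in $S_\omega$). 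So $H$ has \emph{no} nontrivial convergent sequences, while $x_0-y_0\in\overline{H}\setminus H$ because one can first push $n$ large (making $y(n,m)$ uniformly close to $y_0$) and then $m$ large (making $x(n,m)$ close to $x_0$). One further caution: your normalization ``translate so the apex of $S$ is $0$'' moves the copy out of $A_{m_0}(X)$ into $A_{2m_0}(X)$, so sums would no longer land in $A_{m_0+n_0}(X)$; the subtraction $x(n,m)-y(n,m)$ avoids this and keeps everything where the statement needs it.
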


\begin{proof}
Assume that $A_{m_{0}+n_{0}}(X)$ is a sequential space.
Let $\{x_{0}\}\cup\{x(n, m): n, m\in\mathbb{N}\}$ be a closed copy of $S_\omega$ in $A_{m_{0}}(X)$, where $x(n, m)\rightarrow x_{0}$ as $m\rightarrow\infty$ for each $n\in\mathbb{N}$; let $\{y_{0}\}\cup\{y(n, m): n, m\in\mathbb{N}\}$ be a closed copy of space $M_1$ in $A_{n_{0}}(X)$, where the set $\{y(n, m): m\in\mathbb{N}\}$ is discrete and open in $A_{n_{0}}(X)$ for each $n\in\mathbb{N}$. Put $$H: =\{x(n, m)-y(n, m): n, m\in\mathbb{N}\}.$$

We claim that $H$ contains no non-trivial convergent sequence. Assume on the contrary that there exists a non-trivial sequence $\{x(n(k), m(k))-y(n(k), m(k)): k\in\mathbb{N}\}$ converging to $z\in A_{m_{0}+n_{0}}(X)$. Without loss of generality, we may assume that $x(n(k), m(k))-y(n(k), m(k))\neq x(n(l), m(l))-y(n(l), m(l))$ if $k\neq l.$ If there exists some $p\in\mathbb{N}$ such that $\{k\in\mathbb{N}: n(k)=p\}$ is an infinite set. By the assumption of the sequence, there exists a subsequence $\{x(n(l_{k}), m(l_{k}))-y(n(l_{k}), m(l_{k})): k\in\mathbb{N}\}$ of $\{x(n(k), m(k))-y(n(k), m(k)): k\in\mathbb{N}\}$ such that $n(l_{k})=p$ for each $k\in\mathbb{N}$ and $\{m(l_{k}): k\in\mathbb{N}\}$ is an infinite set. Then we have $-y(n(l_{k}), m(l_{k}))=x(n(l_{k}), m(l_{k}))-y(n(l_{k}), m(l_{k}))-x(n(l_{k}), m(l_{k}))\rightarrow z-x_{0}$ as $k\rightarrow\infty$, which is a contradiction. Therefore, there does not exist $p\in\mathbb{N}$ such that $\{k\in\mathbb{N}: n(k)=p\}$ is an infinite set. Similar to the above proof, ones can see that there does not exist $p\in\mathbb{N}$ such that $\{k\in\mathbb{N}: m(k)=p\}$ is an infinite set. Therefore, we have $x(n(k), m(k))=x(n(k), m(k))-y(n(k), m(k))+y(n(k), m(k))\rightarrow z+y_{0}$ as $k\rightarrow\infty$, which is a contradiction. Thus $H$ contains no non-trivial convergent sequence.

By the above claim,  it follows that $H$ is sequentially closed in $A_{m_{0}+n_{0}}(X)$, then $H$ is closed in $A_{m_{0}+n_{0}}(X)$ since $A_{m_{0}+n_{0}}(X)$ is a sequential space.
 However, it is easy to see that $x_0-y_0\in \overline{H}$ and $x_0-y_0\not\in H$, which is a contradiction. Thus $A_{m_{0}+n_{0}}(X)$ is not a sequential space.
\end{proof}

The following lemma was proved in \cite{Y1993}, which plays an important role in the proof of our main theorem.

\begin{lemma}\label{s-omega-1}
\cite[Theorem 3.6]{Y1993} The subspace $A_4(M_3)$ is not a $k$-space.
\end{lemma}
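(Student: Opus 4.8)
The plan is to exhibit a subset $C\subseteq A_4(M_3)$ whose intersection with every compact subset of $A_4(M_3)$ is closed while $C$ itself is not closed; the existence of such a $C$ says exactly that $A_4(M_3)$ is not a $k$-space. (One could also argue more cheaply: $M_3$ is metrizable, it is locally compact because each $C_\alpha$ is a compact clopen subset, and $\mathrm{NI}(M_3)=\{x_\alpha:\alpha<\omega_1\}$ is an uncountable closed discrete subspace, hence neither separable nor compact, so $M_3$ violates clause~(3) of Theorem~\ref{metrizable k} and therefore $A_4(M_3)$ is not a $k$-space. Since $M_3$ is, however, one of the test spaces used to prove Theorem~\ref{metrizable k} itself, I would rather give the direct argument sketched below.)

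First I would reduce the compact sets. Because $M_3=\bigoplus_{\alpha<\omega_1}C_\alpha$ is a topological sum of the compact clopen pieces $C_\alpha$, a subset of $M_3$ is bounded if and only if it meets only finitely many of the $C_\alpha$, that is, if and only if it lies in a finite union $F=C_{\alpha_1}\cup\dots\cup C_{\alpha_m}$, which is a compact metrizable space. Since the support of any compact subset of $A_4(M_3)$ is bounded in $M_3$ (a standard fact about free topological groups; cf.\ \cite{AT2008}), every compact $K\subseteq A_4(M_3)$ satisfies $K\subseteq A_4(F)$ for some such $F$; and $A_4(F)$, being a continuous image of the compact metrizable space $(F\oplus -F\oplus\{0\})^4$ under $i_4$ and having a countable network, is itself compact metrizable. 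Hence it suffices to build $C$ so that (i) $C\cap A_4(F)$ is (sequentially) closed in $A_4(F)$ for every finite union $F$ of spines, while (ii) $C$ is not closed in $A_4(M_3)$.

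For (ii) the non-closedness of $C$ must, by (i), be invisible to all convergent sequences, so it has to arise from the uncountable index set: I would combine the sequences $x(n,\alpha)\to x_\alpha$ for cofinally many $\alpha<\omega_1$ into words of reduced length $4$, arranging that some fixed word $q$ with finite support lies in $\overline C\setminus C$. To see $q\in\overline C$ one tests an arbitrary basic neighbourhood of $q$; after translating by $-q$ and invoking the description of the topology of $A(M_3)$ by Graev extensions of continuous pseudometrics, such a neighbourhood has the form $q+\{g:\widehat\rho(g,0)<\varepsilon\}$ for a continuous pseudometric $\rho$ on $M_3$ and some $\varepsilon>0$, and one must exhibit, for every such $\rho$ and $\varepsilon$, a word of $C$ within $\widehat\rho$-distance $\varepsilon$ of $q$. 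The number $4$ is essential — it is exactly the threshold in Theorem~\ref{metrizable k} — because driving one coordinate of such a word towards its limit produces a shorter word as a partial limit, and this entire cascade of partial limits has to be absorbed by $C$ in order for (i) to hold; length $2$ or $3$ leaves no room for this.

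The hard part is item (ii) together with the verification of (i) for the family one finally chooses: the two requirements pull against each other. Any subfamily indexed by a bounded set of spines is caught by a single convergent sequence, hence by a compact set (so (i) fails); while a family spread "too freely" over $\omega_1$ turns out to be closed and discrete (so (ii) fails). Consequently $C$ must be tuned very carefully — interleaving the coordinates of the length-$4$ words so that inside each $A_4(F)$ one sees only a countable union of convergent sequences together with all of their limits, while over all of $\omega_1$ the word $q$ is approached. Checking that the tuned $C$ contains every one of its non-trivial convergent sequences (so that (i) genuinely holds) is the technical core, and it is here that results such as \cite[Corollary~7.4.3]{AT2008}, controlling when a subset of $A(X)$ meeting each $A_n(X)$ in a finite set must be discrete, do the bookkeeping.
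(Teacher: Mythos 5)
The paper does not prove this lemma at all: it is imported verbatim as \cite[Theorem 3.6]{Y1993}, so there is no internal argument to compare yours against. Judged on its own terms, your proposal splits into two parts of very different status. The parenthetical ``cheap'' argument is in fact a complete and correct derivation from the results the paper quotes: $M_3=\bigoplus_{\alpha<\omega_1}C_\alpha$ is metrizable and locally compact, and $\mathrm{NI}(M_3)=\{x_\alpha:\alpha<\omega_1\}$ is an uncountable closed discrete subspace, hence neither separable nor compact, so clause (3) of Theorem~\ref{metrizable k} fails and by (2)$\Leftrightarrow$(3) the space $A_4(M_3)$ is not a $k$-space. Your circularity worry is legitimate with respect to Yamada's original development (his Theorem 3.6 is precisely the counterexample underlying the necessity of (3)), but within the logic of the present paper, which cites both statements as black boxes from \cite{Y1993}, that two-line deduction is a valid proof and is essentially why the authors feel free to cite rather than prove.

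The ``direct argument'' you prefer, however, has a genuine gap: the witness set $C$ is never defined. Your reduction is correct and worth keeping --- compact subsets of $A_4(M_3)$ have bounded support, bounded subsets of a topological sum meet only finitely many summands, so every compact set lies in the compact metrizable set $A_4(F)$ for a finite union $F$ of spines, and it suffices to produce $C$ not closed in $A_4(M_3)$ but with $C\cap A_4(F)$ closed for every such $F$. But everything after that is a description of what a construction \emph{would have to do}, not a construction: you correctly observe that the naive choice $\{x(n,\alpha)-x_\alpha+x(m,\beta)-x_\beta:\alpha\neq\beta,\ n,m\in\omega\}$ fails (its trace on a single $A_4(F)$ already accumulates at length-$2$ partial limits and at $0$), and you then state that $C$ ``must be tuned very carefully'' and that verifying sequential closedness of the tuned set is ``the technical core.'' That tuning --- choosing, for each pair of spines, which indices $n,m$ enter the word so that the trace on every finite union of spines is closed while $0$ (or your $q$) remains in the closure over all of $\omega_1$ --- is the entire content of Yamada's Theorem 3.6, and it is exactly the step you omit. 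As written, the direct route is a plan, not a proof; either supply the explicit set and the two verifications, or fall back on the deduction from Theorem~\ref{metrizable k}, which for the purposes of this paper is unobjectionable.
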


\begin{lemma}\label{aleph-space}
Let $X$ be a La\v{s}nev space. If $A_{2}(X)$ is a $k$-space, then $X$ is an $\aleph$-space.
\end{lemma}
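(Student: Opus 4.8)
The plan is to prove the contrapositive: if $X$ is a La\v{s}nev space that is not an $\aleph$-space, then $A_2(X)$ is not a $k$-space. By Fact 2, a La\v{s}nev space which is not an $\aleph$-space must contain a closed copy of $S_{\omega_1}$. So I would start by fixing a closed copy $S_{\omega_1} = \{x\}\cup\bigcup_{\alpha<\omega_1} T_\alpha$ inside $X$, where each $T_\alpha = \{x(n,\alpha):n\in\mathbb{N}\}$ is a sequence converging to $x$. The goal is then to exhibit a subset $C$ of $A_2(X)$ which is not closed but meets every compact subset of $A_2(X)$ in a closed (indeed finite) set.

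The natural candidate, mimicking the standard argument that $A_2(S_{\omega_1})$ fails to be a $k$-space, is something like $C := \{x(n,\alpha) - x : n\in\mathbb{N},\ \alpha<\omega_1\}$, or a diagonal-type variant $\{x(n,\alpha)-x(n,\beta):\dots\}$ chosen so that $0\in\overline{C}\setminus C$ while $C$ itself contains no nontrivial convergent sequence. First I would check that $0$ (or whatever the relevant limit point is) lies in the closure of $C$: this uses that every neighborhood of $0$ in $A_2(X)$, restricted via the maps $i_2$ and the structure of $S_{\omega_1}$, must catch points of $C$ — here one invokes that in $S_{\omega_1}$ every neighborhood of $x$ contains a tail of all but countably many $T_\alpha$, or uses an uncountability/pressing-down argument. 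Second, and this is the crux, I would show that $C\cap K$ is finite for every compact $K\subseteq A_2(X)$. Since $A_2(X)$ is generated by the free basis $X$ and $\mathrm{supp}(K)$ is compact (or at least well-controlled) for compact $K$, and a compact subset of the La\v{s}nev space $X$ is metrizable hence meets $S_{\omega_1}$ in a compact, therefore "small" set (contained in finitely many $T_\alpha\cup\{x\}$ essentially, by sequential compactness and the fan structure), one deduces that $\mathrm{supp}(C)\cap\mathrm{supp}(K)$ lies within a metrizable compact piece; combined with a boundedness/support-counting lemma for $A_n$ (the kind of "$K\cap A_n(X)$ finite $\Rightarrow K$ discrete" phenomenon already used via \cite[Corollary 7.4.3]{AT2008} in Proposition~\ref{t1}) this forces $C\cap K$ to be finite.

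The main obstacle I expect is the second step — precisely controlling how a compact subset of $A_2(X)$ can intersect $C$. One has to rule out, for instance, a compact set that wanders through uncountably many of the converging sequences $T_\alpha$; this is exactly where the La\v{s}nev hypothesis does real work, since in a La\v{s}nev space compact sets are metrizable and a closed copy of $S_{\omega_1}$ is "compact-small" in the sense that its intersection with any compact set is contained in the closure of finitely many spokes. I would isolate this as a preliminary claim: \emph{for every compact $K\subseteq A_2(X)$, the set $\{\alpha<\omega_1 : \mathrm{supp}(K)\cap T_\alpha\neq\emptyset\}$ is finite, or at worst the relevant part of $C$ meeting $K$ has bounded index}, and prove it using that $\mathrm{supp}(K)$ is compact in $X$ hence metrizable, together with the fact that $S_{\omega_1}$ contains no compact set meeting infinitely many spokes nontrivially apart from ones converging to $x$. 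Once that claim is in hand, the finiteness of $C\cap K$ follows by the same discreteness argument as in the proof of Proposition~\ref{t1}, and since $C$ is not closed (its closure contains the limit point not in $C$) while meeting every compact set in a closed set, $A_2(X)$ fails the $k$-space property, completing the contrapositive.
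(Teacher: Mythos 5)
Your first step (via Fact 2, reduce to showing that a closed copy of $S_{\omega_1}$ in $X$ is incompatible with $A_2(X)$ being a $k$-space) is the same as the paper's. But from there the paper constructs nothing: it cites Yamada's result \cite[Corollary 2.2]{Y1997} that a closed copy of $S_{\omega_1}$ forces $A_2(X)$ to be non-sequential, and then converts ``not sequential'' into ``not a $k$-space'' by noting that $A_2(X)$ is stratifiable (Lemma~\ref{stratifiable k}), hence has a $G_\delta$-diagonal, and a $k$-space with a $G_\delta$-diagonal is sequential. Your plan to contradict the $k$-space property directly is legitimate in principle, but as written it has a genuine gap rather than just missing details.

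The gap is in the witness set. For $C=\{x(n,\alpha)-x : n\in\mathbb{N},\ \alpha<\omega_1\}$, continuity of the group operations gives $x(n,\alpha)-x\to x-x=0$ in $A_2(X)$ for each fixed $\alpha$, so $K_\alpha:=\{x(n,\alpha)-x : n\in\mathbb{N}\}\cup\{0\}$ is a compact subset of $A_2(X)$ with $C\cap K_\alpha$ infinite and not closed in $K_\alpha$; thus $C$ cannot witness failure of the $k$-space property (it is already ``seen'' by compact sets). The suggested diagonal variant $\{x(n,\alpha)-x(n,\beta)\}$ has the same defect, since for fixed $\alpha\neq\beta$ it again converges to $0$ as $n\to\infty$. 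A working set must truncate the spokes according to the combinatorics of the universal uniformity, and one must then verify both that $0\in\overline{C}$ against the $W(P)$-neighborhoods of Theorem~\ref{yamada-neighborhood} (which involves compositions of entourages, not single ones) and that $C$ avoids all nontrivial convergent sequences; this verification is exactly the content of the cited Corollary 2.2 of \cite{Y1997} and is not supplied. Two further points: the appeal to \cite[Corollary 7.4.3]{AT2008} (sets meeting each $A_n(X)$ in a finite set are discrete) is inapplicable here, since the entire configuration lies inside $A_2(X)$; and if your argument only yields non-sequentiality (as tightness-type arguments naturally do), you still need the stratifiability/$G_\delta$-diagonal bridge to conclude that $A_2(X)$ is not a $k$-space, a step your sketch omits.
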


\begin{proof}
By Fact 2, it suffices to show that $X$ contains no any closed copy of $S_{\omega_{1}}$. Assume that $X$ contains a closed copy of $S_{\omega_{1}}$. Then it follows from \cite[Corollary 2.2]{Y1997} that $A_{2}(X)$ is not a sequential space. However, by Lemma~\ref{stratifiable k}, the subspace $A_{2}(X)$ is a stratifiable space, hence it has a $G_{\delta}$-diagonal, and then $A_{2}(X)$ is a sequential space since a $k$-space with a $G_{\delta}$-diagonal is sequential \cite{Gr1984}.
\end{proof}

Recall that a space is called $\omega_1$-{\it compac} if every uncountable subset of $X$ has a cluster point.
Now we can show one of our main theorems.

\begin{theorem}\label{k-space-characterization}
Let $X$ be a non-metrizable La\v{s}nev space. Then the following statements are equivalent.
\begin{enumerate}
\item $A(X)$ is a sequential space.
\item $A(X)$ is a $k$-space.
\item $A_n(X)$ is a $k$-space for each $n\in\mathbb{N}$.
\item $A_4(X)$ is a $k$-space.
\item $X$ is a topological sum of a space with a countable $k$-network consisting of compact subsets and a discrete space.
\end{enumerate}
\end{theorem}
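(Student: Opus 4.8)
\emph{Proof plan.} The plan is to run the cycle $(1)\Rightarrow(2)\Rightarrow(3)\Rightarrow(4)\Rightarrow(5)\Rightarrow(1)$, after first recording a reduction that trivialises several of the steps. Since a La\v{s}nev space is stratifiable, Lemma~\ref{stratifiable k} shows that $A(X)$ --- and hence each of its closed subspaces, in particular every $A_n(X)$ --- is stratifiable, so has a $G_\delta$-diagonal; as a $k$-space with a $G_\delta$-diagonal is sequential \cite{Gr1984}, for $A(X)$ and for each $A_n(X)$ being a $k$-space coincides with being sequential. This yields $(1)\Leftrightarrow(2)$ at once and lets me read $(3)$ and $(4)$ as sequentiality statements. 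Among the remaining implications, $(1)\Rightarrow(2)$ and $(3)\Rightarrow(4)$ are trivial, and $(2)\Rightarrow(3)$ holds because $A_n(X)$ is closed in $A(X)$ and closed subspaces of $k$-spaces are $k$-spaces.

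For $(4)\Rightarrow(5)$, the crux, assume $A_4(X)$ is a $k$-space, hence sequential; then $A_2(X)$, being closed in $A_4(X)$, is sequential, so a $k$-space, and Lemma~\ref{aleph-space} gives that $X$ is an $\aleph$-space (equivalently, a La\v{s}nev space with no closed copy of $S_{\omega_1}$). As $X$ is non-metrizable, Fact 1 gives a closed copy of $S_\omega$ in $X$, which is then also closed in $A_2(X)$. Next I exclude two subspaces. If $X$ contained a closed copy of $M_1$, then $S_\omega$ and $M_1$ would both be closed in $A_2(X)$ and Lemma~\ref{l-sequential-a} (with $m_0=n_0=2$) would force $A_4(X)$ to be non-sequential, a contradiction; so $X$ contains no closed copy of $M_1$. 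If $X$ contained a closed copy of $M_3$, then $A_4(M_3)$ would embed as a closed subspace of $A_4(X)$, contradicting Lemma~\ref{s-omega-1}; so $X$ contains no closed copy of $M_3$. The remainder is structural. From the absence of a closed $M_3$ one gets that $\mbox{NI}(X)$ is $\omega_1$-compact --- an uncountable closed discrete subset of $\mbox{NI}(X)$ could, by the collectionwise normality of the La\v{s}nev space $X$, be expanded to a closed copy of $M_3$. Being a closed subspace, $\mbox{NI}(X)$ is itself a La\v{s}nev $\aleph$-space, hence paracompact; together with $\omega_1$-compactness this makes it Lindel\"{o}f, so it has a countable $k$-network. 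The absence of a closed $M_1$ then governs the behaviour of $X$ near $\mbox{NI}(X)$, and, combined with the internal structure theory of La\v{s}nev $\aleph$-spaces, produces a clopen decomposition $X=Y_0\oplus D$ with $Y_0$ cosmic and $D$ discrete. Finally $A_4(Y_0)$, being a closed subspace of $A_4(X)$, is a $k$-space, and an argument paralleling Theorem~\ref{metrizable k} shows the cosmic La\v{s}nev space $Y_0$ is $\sigma$-compact and so carries a countable $k$-network consisting of compact subsets --- which is statement $(5)$.

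For $(5)\Rightarrow(1)$, write $X=Y\oplus D$ as in $(5)$. Passing to the (still countable) family of finite unions of members of the given $k$-network gives an increasing compact cover of $Y$ witnessing that $Y$ is a $k_\omega$-space, so by the classical fact that the free Abelian topological group over a $k_\omega$-space is a $k_\omega$-space (see \cite{AT2008}), $A(Y)$ is a $k_\omega$-space, hence sequential; the same applies to $A(Y\oplus F)$ for each finite $F\subseteq D$. Since the support of any compact subset of $A(X)$ --- in particular the range of a convergent sequence together with its limit --- is bounded in $X$, and a bounded subset of the discrete summand $D$ is finite, every convergent sequence of $A(X)$ lies in some $A(Y\oplus F)$; moreover $A(X)$ is the direct limit, in the category of topological spaces, of the subgroups $A(Y\oplus F)$, $F\in[D]^{<\omega}$. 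Hence a sequentially closed subset of $A(X)$ meets each $A(Y\oplus F)$ in a (sequentially closed, so) closed set, and is therefore closed in $A(X)$; thus $A(X)$ is sequential, completing the cycle.

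The principal obstacle is the structural portion of $(4)\Rightarrow(5)$: converting the purely negative information (no closed $S_{\omega_1}$, $M_1$, or $M_3$) together with the $\aleph$-space property into the explicit clopen splitting of $X$, and then recognising that the cosmic summand must be $\sigma$-compact with a compact $k$-network. The reduction to sequentiality, the excluded-subspace arguments, and $(5)\Rightarrow(1)$ are comparatively routine; the delicate point is handling the possibly non-first-countable points of $\mbox{NI}(X)$ against the local compactness extracted from the absence of $M_1$ and the Lindel\"{o}f property extracted from the absence of $M_3$.
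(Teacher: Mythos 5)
Your reductions to sequentiality, the easy implications, and the ``excluded subspace'' phase of $(4)\Rightarrow(5)$ match the paper: you rule out $S_{\omega_1}$ via Lemma~\ref{aleph-space}, $M_1$ via Lemma~\ref{l-sequential-a} with $m_0=n_0=2$, and $M_3$ via Uspenskij's closed-embedding theorem together with Lemma~\ref{s-omega-1}, and you correctly extract $\omega_1$-compactness of $\mbox{NI}(X)$ from the last of these. Your $(5)\Rightarrow(1)$ is also essentially sound, though the paper's route --- $A(X)\cong A(X_1)\times A(X_2)$ by Thomas's theorem, with $A(X_2)$ discrete and $A(X_1)$ a $k_\omega$-space --- is cleaner than your direct-limit formulation and is in fact what justifies your assertion that $A(X)$ is the inductive limit of the subgroups $A(Y\oplus F)$.

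The genuine gap is the decisive structural step of $(4)\Rightarrow(5)$. You write that the negative information, ``combined with the internal structure theory of La\v{s}nev $\aleph$-spaces, produces a clopen decomposition $X=Y_0\oplus D$ with $Y_0$ cosmic and $D$ discrete,'' and that ``an argument paralleling Theorem~\ref{metrizable k} shows $Y_0$ is $\sigma$-compact.'' This is the conclusion restated, not a proof: no mechanism is given for producing the clopen splitting, and Theorem~\ref{metrizable k} is a statement about \emph{metrizable} spaces, whereas your $Y_0$ contains a closed copy of $S_\omega$, so ``paralleling'' it is not a transferable argument. The paper's mechanism is concrete and is what you are missing: from the $\aleph$-space property and the absence of a closed copy of $M_1$ one obtains a $\sigma$-locally finite $k$-network $\mathcal{P}$ whose members have compact closures; by $\omega_1$-compactness of $\mbox{NI}(X)$ only countably many members of $\mathcal{P}$ meet $\mbox{NI}(X)$; setting $X_1:=\bigcup\{\overline{P}:P\in\mathcal{P},\ P\cap\mbox{NI}(X)\neq\emptyset\}$ gives a set that is $\sigma$-compact by construction (so no separate $\sigma$-compactness argument is needed), carries a countable $k$-network of compact sets after adjoining singletons of its isolated points, and is sequentially open, hence open, while its complement is clopen and discrete. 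Without this (or an equivalent) construction, the implication $(4)\Rightarrow(5)$ is not established.
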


\begin{proof}
The implications of (1) $\Rightarrow$ (2), (2) $\Rightarrow$ (3) and (3) $\Rightarrow$ (4) are trivial. The implication of (2) $\Rightarrow$ (1) follows from Lemma~\ref{stratifiable k}. It suffices to show (4) $\Rightarrow$ (5) and (5) $\Rightarrow$ (2).

(4) $\Rightarrow$ (5). First, we show the following claim.

{\bf Claim 1}: The subspace $\mbox{NI}(X)$ is $\omega_1$-compact in $X$.

Assume on the contrary that there exists a closed, discrete and uncountable subset $\{x_\alpha: \alpha<\omega_1\}$ in $\mbox{NI}(X)$. Since $X$ is paracompact and $\mbox{NI}(X)$ is closed in $X$, there is an uncountable and discrete collection of open subsets $\{U_\alpha: \alpha<\omega_1\}$ in $X$ such that $x_\alpha\in U_\alpha$ for each $\alpha<\omega_1$. Since $X$ is Fr\'echet-Urysohn, for each $\alpha<\omega_1$, let $\{x(n, \alpha): n\in \mathbb{N}\}$ be a non-trivial sequence converging to $x_\alpha$ in $X$. For each $\alpha<\omega_1$, let $$C_\alpha: =\{x(n, \alpha): n\in \mathbb{N}\}\cup \{x_\alpha\}$$ and put $$M_3: =\bigcup\{C_\alpha: \alpha<\omega_1\}.$$ Since $M_3$ is a closed subset of $X$ and $X$ is a La\v{s}nev space, it follows from \cite{U1991} that the subspace $A_4(M_3)$ is homeomorphic to a closed subset of $A_4(X)$, thus $A_4(M_3)$ is sequential. However, by Lemma ~\ref{s-omega-1},the subspace  $A_4(M_3)$ is not sequential, which is a contradiction. Therefore, Claim 1 holds.

Since $X$ a non-metrizable La\v{s}nev space, the space $X$ must contain a closed copy of $S_\omega$ by Fact 1. Moreover, since $A_{4}(X)$ is a sequential space, the subspace $A_2(X)$ is also a sequential space. Then, by Lemma~\ref{l-sequential-a}, the space $X$ contains no closed copy of the space $M_1$. In addition, $X$ is an $\aleph$-space by Lemma~\ref{aleph-space}. Hence there exists a $\sigma$-locally finite $k$-network $\mathcal{P}$ in $X$ such that $\overline{P}$ is compact for each $P\in \mathcal{P}$. Since every compact subset of $X$ is metrizable in a La\v{s}nev space \cite{Gr1984}, the family $\mathcal{P}$ is a $\sigma$-locally-finite $k$-network consisting of separable metric subsets of $X$. Let $$\mathcal{P}_1: =\{\overline{P}: P\in \mathcal{P}, P\cap \mbox{NI}(X)\neq \emptyset\}.$$ By the $\omega_1$-compactness of $\mbox{NI}(X)$, the family $\{P\in \mathcal{P}: P\cap \mbox{NI}(X)\neq \emptyset\}$ is countable since at most countably many elements of an arbitrary locally-finite family intersect an $\omega_1$-compact subset. Therefore, we have $|\mathcal{P}_1|<\omega_1$. Note that $|\overline{P}\cap \mbox{I}(X)|\leq \omega$, hence $$\mathcal{P}_1\cup\{\{y\}: y\in \overline{P}\cap \mbox{I}(X), P\in \mathcal{P}_1\}$$ is a countable $k$-network of $X_1: =\bigcup \mathcal{P}_1$. It is easy to prove that $X_1$ is sequentially open, thus it is open in $X$. Let $X_2: =X\setminus X_1$. Then $X_2$ is an open and closed discrete subset of $X$, thus $X=X_1\bigoplus X_2$.

(5) $\Rightarrow$ (2). Let $X: =X_1\bigoplus X_2$, where $X_1$ is a $k$-space with a countable $k$-network consisting of compact subsets in $X_1$ and $X_2$ is a discrete space. It follows from \cite{T1974} that $A(X)\cong A(X_1)\times A(X_2)$. Since $X_1$ is a $k$-space with a countable $k$-network consisting of compact subsets, the space $X_1$ is a $k_\omega$-space, hence it follows from \cite[Theorem 7.4.1]{AT2008} that $A(X_1)$ is a $k_\omega$-space, then $A(X)$ is a $k$-space since $A(X_2)$ is discrete.
\end{proof}

It is natural to consider that if it can be replaced ``$A_4(X)$'' with ``$A_3(X)$ or $A_2(X)$'' in Theorem~\ref{k-space-characterization}. Next we shall give a complete answer to this question. First, the following proposition shows that it can not be replaced ``$A_4(X)$'' with ``$A_2(X)$'' in Theorem~\ref{k-space-characterization}.

\begin{proposition}
Let the space $X: =S_\omega\bigoplus M_3$. Then $A_2(X)$ is a sequential space, but $A(X)$ is not a sequential space.
\end{proposition}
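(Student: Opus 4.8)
The plan is to verify the two halves of the statement separately, building on the machinery already in place. For the positive part---that $A_2(X)$ is sequential when $X = S_\omega \bigoplus M_3$---I would first recall that $A_2(X) \cong$ a quotient of $(X \bigoplus -X \bigoplus \{0\})^2$ via $i_2$, and that sequentiality is governed by which subsets of the ``two-word'' space are closed. Since $X$ is a La\v snev space (a closed image of a metric space; indeed $S_\omega$ and $M_3$ are both La\v snev, and a topological sum of La\v snev spaces is La\v snev), and since by Lemma~\ref{stratifiable k} strata exist when $X$ is stratifiable, the cleanest route is to invoke the characterization of when $A_2(X)$ is sequential for La\v snev $X$: by \cite[Corollary 2.2]{Y1997} (used already in the proof of Lemma~\ref{aleph-space}), $A_2(X)$ fails to be sequential for a La\v snev space precisely when $X$ contains a closed copy of $S_{\omega_1}$. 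So the first key step is to check that $S_\omega \bigoplus M_3$ contains no closed copy of $S_{\omega_1}$: the point is that $S_{\omega_1}$ is connected through its apex and cannot be split across the two clopen summands, $S_\omega$ contains no $S_{\omega_1}$ (it is an $\aleph_0$-space, even metrizable-like in the relevant sense), and $M_3 = \bigoplus_{\alpha<\omega_1} C_\alpha$ is a topological sum of metrizable compacta, hence locally metrizable, so it too contains no closed copy of $S_{\omega_1}$ (any such copy would have its apex in some single $C_\alpha$, which is first-countable). This gives sequentiality of $A_2(X)$.

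For the negative part---that $A(X)$ is not sequential---the strategy is to locate a subspace already known to obstruct sequentiality. Since $M_3$ is closed in $X$ and $X$ is La\v snev, by the result of Uspenskij \cite{U1991} cited in the proof of Theorem~\ref{k-space-characterization}, $A_n(M_3)$ embeds as a closed subspace of $A_n(X)$ for each $n$, and in fact $A(M_3)$ embeds as a closed subgroup of $A(X)$. By Lemma~\ref{s-omega-1}, $A_4(M_3)$ is not even a $k$-space, hence not sequential; therefore $A_4(M_3)$, being a closed subspace of $A_4(X) \subseteq A(X)$, witnesses that $A(X)$ is not sequential (a closed subspace of a sequential space is sequential). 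Alternatively, one could run the argument through Lemma~\ref{l-sequential-a}: $S_\omega$ appears as a closed copy inside $X$ (hence inside $A_1(X)$), and $M_3$ --- or a space containing a closed copy of $M_1$ --- appears, but the direct route via Lemma~\ref{s-omega-1} and \cite{U1991} is shorter and avoids having to manufacture an $M_1$.

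The step I expect to require the most care is the closed-embedding claim in the negative part: one must be sure that the ambient sum structure $X = S_\omega \bigoplus M_3$ does not interfere, i.e.\ that the copy of $A_4(M_3)$ sitting inside $A_4(X)$ really is closed in all of $A_4(X)$ and not merely in the ``$M_3$-part.'' This follows because $M_3$ is clopen in $X$, so words in $A_4(X)$ whose support meets $S_\omega$ form a set whose closure is controlled separately, and the retraction $X \to M_3$ collapsing $S_\omega$ to a point (or rather the induced continuous homomorphism $A(X) \to A(M_3)$) restricts on $A_4(M_3)$ to the identity, exhibiting $A_4(M_3)$ as the preimage of a closed set --- hence closed. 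Once that is pinned down, combining it with Lemma~\ref{s-omega-1} closes the argument; and the positive half is then just the observation above that neither summand, nor their sum, admits a closed $S_{\omega_1}$, so \cite[Corollary 2.2]{Y1997} applies to give $A_2(X)$ sequential.
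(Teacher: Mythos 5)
Your positive half has a genuine gap. You invoke \cite[Corollary 2.2]{Y1997} in the form ``$A_2(X)$ fails to be sequential for a La\v{s}nev space \emph{precisely} when $X$ contains a closed copy of $S_{\omega_1}$,'' but the only direction that citation supplies --- and the only direction the paper ever uses, in Lemma~\ref{aleph-space} --- is: a closed copy of $S_{\omega_1}$ in $X$ implies $A_2(X)$ is not sequential. What you need is the converse (no closed $S_{\omega_1}$ implies $A_2(X)$ \emph{is} sequential), and that is neither stated in the paper nor a plausible consequence of a tightness estimate; your correct observation that $S_\omega\bigoplus M_3$ contains no closed copy of $S_{\omega_1}$ therefore yields nothing by itself. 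The paper closes this half directly: $(X\bigoplus -X\bigoplus\{0\})^{2}$ is sequential for $X=S_\omega\bigoplus M_3$ (decompose the square into a sum of products of $S_\omega$, $M_3$ and their negatives; each piece is a $k_\omega$-space or metrizable, hence sequential), and by \cite[Proposition 4.8]{AOP1989} the natural map $i_2\colon (X\bigoplus -X\bigoplus\{0\})^{2}\to A_2(X)$ is closed, so $A_2(X)$ is a closed continuous image of a sequential space and hence sequential. You need an argument of this kind in place of the unproved biconditional.

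Your negative half is correct, though it takes a different route from the paper. The paper argues that if $A(X)$ were sequential then so would be its closed subgroup $A(M_3)$, whence $\mathrm{NI}(M_3)$ would be separable by \cite[Theorem 2.11]{AOP1989}, contradicting the fact that $\{x_\alpha:\alpha<\omega_1\}$ is an uncountable closed discrete set. Your route --- $A_4(M_3)$ is not a $k$-space by Lemma~\ref{s-omega-1} and embeds as a closed subspace of $A_4(X)$ by \cite{U1991}, so $A(X)$ cannot be sequential --- is equally valid and uses only machinery already deployed in the proof of Theorem~\ref{k-space-characterization}; your care about the closedness of that embedding is warranted but is indeed settled by the cited result of Uspenskij together with $M_3$ being closed in $X$.
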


\begin{proof}
First of all, it is not difficult to verify that $(X\bigoplus -X\bigoplus\{0\})^{2}$ is a sequential space. Then, by \cite[Proposition 4.8]{AOP1989}, the mapping $i_2: (X\bigoplus -X\bigoplus\{0\})^{2}\to A_2(X)$ is closed, which shows that $A_2(X)$ is sequential. Assume that $A(X)$ is sequential, then $A(M_3)$ is sequential. Hence $\mbox{NI}(M_3)$ is separable \cite[Theorem 2.11]{AOP1989}. This is a contradiction, thus $A(X)$ is not sequential.
\end{proof}

In order to prove that it can not be replaced ``$A_4(X)$'' with ``$A_3(X)$'' in Theorem~\ref{k-space-characterization}, we must recall some concepts.

Consider $^\omega\omega$, the collection of all functions from $\omega$ to $\omega$. We define a quasi-order $\leq^*$ on $^\omega\omega$ by specifying that
if $f, g\in ^\omega\omega$, then $f\leq^* g$ if $f(n)\leq g(n)$ for all but finitely many $n\in \omega$. A subset $\mathscr{F}$ of $^\omega\omega$ is {\it bounded} if there is a $g\in$$^\omega\omega$ such that $f\leq^* g$ for all $f\in\mathscr{F}$, and is {\it unbounded} otherwise. We denote by $\flat$ the smallest cardinality of an unbounded family in $^\omega\omega$. It is well known that $\omega <\flat\leq \mathrm{c}$, where $\mathrm{c}$ denotes the cardinality of the continuum.

Let $\mathcal{U}_X$ be the universal uniformity on a space $X$ and put $\mathcal{P}: =\{P\subset \mathcal{U}_X: |P|\leq \omega\}$. For each $P=\{U_i\}_{i\in\mathbb{N}}\in \mathcal{P}^{\omega}$, let $$W(P): =\{x_1-y_1+x_2-y_2+ ... +x_k-y_k: (x_i, y_i)\in U_i, i\leq k, k\in \mathbb{N}\}$$
and $$\mathcal{W}: =\{W(P): P\in \mathcal{P}^{\omega}\}.$$

In \cite{Y1993}, K. Yamada showed the following important result, which gives a neighborhood base of 0 in $A(X)$.

\begin{theorem}\label{yamada-neighborhood}
\cite[Theorem 2.3]{Y1993} The family $\mathcal{W}$ is a neighborhood base of 0 in $A(X)$.
\end{theorem}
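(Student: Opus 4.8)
The plan is to recover the Markov topology of $A(X)$ directly. Recall (see \cite{AT2008}) that the topology $\tau_0$ of $A(X)$ is the finest group topology on the abstract free Abelian group over $X$ for which the inclusion $\iota\colon X\to A(X)$ is continuous; so it is enough to exhibit a group topology $\tau$ on $A(X)$ having $\mathcal{W}$ as a neighbourhood base at $0$, to check that $\iota\colon X\to (A(X),\tau)$ is continuous (this forces $\tau\subseteq\tau_0$), and to check that $\tau$ is itself finest among the group topologies making $\iota$ continuous (applying this to $\tau_0$ forces $\tau_0\subseteq\tau$); then $\tau=\tau_0$ and the theorem follows. Throughout one may restrict to those $P=\{U_i\}_{i\in\mathbb{N}}$ with the $U_i$ symmetric and decreasing ($U_1\supseteq U_2\supseteq\cdots$): replacing $U_i$ by $\bigcap_{j\le i}(U_j\cap U_j^{-1})$ only shrinks $W(P)$, so the sets $W(P)$ of this special form are cofinal in $\mathcal{W}$ and determine the same filter. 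For such $P$ one has the convenient description: a sum $x_1-y_1+\cdots+x_k-y_k$ lies in $W(P)$ if and only if for every $n\ge 1$ at most $n-1$ of the pairs $(x_i,y_i)$ lie outside $U_n$ — a Hall-type matching fact (a finite multiset of pairs can be reindexed so that the $i$-th pair lies in $U_i$ exactly when this counting condition holds).

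First, $\tau$, defined by taking $\mathcal{W}$ and its translates as neighbourhood bases, is a group topology. Each $W(P)$ contains $0$; given $W(P_1),W(P_2)$ the entourages $U_i:=U_i^{(1)}\cap U_i^{(2)}$ give $W(P)\subseteq W(P_1)\cap W(P_2)$; and $-W(P)=W(P)$ since the $U_i$ are symmetric. The one axiom with content is that each $W(P)$ contains $W(P')+W(P')$ for a suitable $P'$: with $P=\{U_i\}$ decreasing take $P':=\{U_{2i}\}_{i\in\mathbb{N}}$. If two sums lie in $W(P')$, each has at most $n-1$ pairs outside $U_{2n}$ for every $n$, so the concatenated sum has at most $2n-2$ pairs outside $U_{2n}$; and since a pair outside $U_{2n-1}$ is a fortiori outside $U_{2n}$, it also has at most $2n-2$ pairs outside $U_{2n-1}$. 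In either parity the number of pairs outside $U_m$ is at most $m-1$, so by the description above the concatenated sum lies in $W(P)$. (No separation axiom is needed here for $\tau$ to be a group topology.)

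Second, the topology induced by $\tau$ on $X$ is the original one, so $\iota$ is continuous. On the one hand $(x_0+W(P))\cap X\supseteq U_1[x_0]$, which is a neighbourhood of $x_0$ because $\mathcal{U}_X$ is compatible with the topology of $X$; hence every basic $\tau$-neighbourhood of a point of $X$ is an ordinary neighbourhood. On the other hand, given open $O\ni x_0$ in $X$, by complete regularity pick continuous $f\colon X\to[0,1]$ with $f(x_0)=0$ and $\{f<1\}\subseteq O$, let $\bar f\colon A(X)\to\mathbb{R}$ be its extension to a homomorphism, and set $U_i:=\{(a,b):|f(a)-f(b)|<2^{-i-1}\}\in\mathcal{U}_X$. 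If $x\in(x_0+W(P))\cap X$, write $x-x_0=\sum_{i=1}^k(x_i-y_i)$ with $(x_i,y_i)\in U_i$; applying $\bar f$ gives $f(x)=f(x_0)+\sum_{i=1}^k(f(x_i)-f(y_i))$, so $|f(x)|<\sum_{i\ge 1}2^{-i-1}=\frac{1}{2}$ and $x\in O$. Thus the sets $(x_0+W(P))\cap X$ form a neighbourhood base of $x_0$ in $X$, as needed, and in particular $\tau\subseteq\tau_0$.

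Third, $\tau$ is finest among the group topologies making $\iota$ continuous. Let $\sigma$ be such a topology and $V$ a $\sigma$-neighbourhood of $0$; choose symmetric $\sigma$-neighbourhoods $V=V_0\supseteq V_1\supseteq\cdots$ of $0$ with $V_{i+1}+V_{i+1}\subseteq V_i$. Then $U_i:=\{(a,b)\in X\times X:a-b\in V_i\}$ is a base for a uniformity $\mathcal{V}$ on $X$ (symmetric diagonal neighbourhoods with $U_{i+1}\circ U_{i+1}\subseteq U_i$) whose underlying topology is coarser than that of $X$, since each $U_i[x]=(x+V_i)\cap X$ is a $\sigma$-neighbourhood of $x$ in $X$ and hence an ordinary one; consequently the uniformity generated by $\mathcal{U}_X$ and $\mathcal{V}$ still induces the topology of $X$, so it equals $\mathcal{U}_X$, whence $\mathcal{V}\subseteq\mathcal{U}_X$ and $P:=\{U_i\}_i$ is admissible. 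For $\sum_{i=1}^k(x_i-y_i)\in W(P)$ one has $x_i-y_i\in V_i$, and an easy induction from $V_{i+1}+V_{i+1}\subseteq V_i$ gives $V_1+\cdots+V_k\subseteq V_0$, whence $W(P)\subseteq V$. So every $\sigma$-neighbourhood of $0$ contains a member of $\mathcal{W}$, i.e.\ $\sigma\subseteq\tau$; in particular $\tau_0\subseteq\tau$, and with the second step $\tau=\tau_0$, so $\mathcal{W}$ is a neighbourhood base at $0$ in $A(X)$. The genuinely delicate point is the combinatorial description of $W(P)$ and its use in the ``$W(P')+W(P')\subseteq W(P)$'' step, where one must carefully track which entourage each coordinate pair belongs to after concatenating and reindexing two words; this is exactly what forces $P$ to be an infinite sequence of entourages rather than a single one. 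The remaining verifications — the filter-base axioms, the group-topology axioms, the coefficient computation via $\bar f$, and the uniformity bookkeeping in the last step — are routine.
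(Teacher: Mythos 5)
Your proof is correct, but there is nothing in the paper to compare it against: the statement is quoted verbatim from Yamada \cite[Theorem 2.3]{Y1993} and used as a black box, with no proof supplied. What you have written is, in substance, the classical argument for describing the topology of $A(X)$ via the universal uniformity: exhibit $\mathcal{W}$ as a base at $0$ of a group topology $\tau$, check that $\tau$ traces correctly on $X$ so that the inclusion is continuous (giving $\tau\subseteq\tau_0$), and check that every group topology making the inclusion continuous is coarser than $\tau$ (giving $\tau_0\subseteq\tau$). All three steps are sound: the reindexing device $P'=\{U_{2i}\}$ with the counting criterion handles $W(P')+W(P')\subseteq W(P)$; continuity of $\iota$ needs only the easy inclusion $U_1[x_0]\subseteq (x_0+W(P))\cap X$ (your second half of that step, showing the trace equals the original topology, is not needed for the theorem, though it is the standard embedding statement); and the maximality step correctly legitimizes the entourages $\{(a,b): a-b\in V_i\}$ by joining with $\mathcal{U}_X$ and invoking the universality of the universal uniformity. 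The only phrasing to tighten is the ``if and only if'' in your combinatorial description of $W(P)$: read literally about an arbitrary representation of a group element it is false in the ``only if'' direction, but the parenthetical version (a finite multiset of pairs matches to the indices $1,\dots,k$ exactly when at most $n-1$ pairs lie outside $U_n$ for every $n$) is what you actually state and use, and you apply the trivial direction only to representations already witnessing membership in $W(P')$, so nothing is affected.
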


The following theorem shows that we can replace ``$A_4(X)$'' with ``$A_3(X)$'' in Theorem~\ref{k-space-characterization} under the assumption of $\flat=\omega_1$. Note that the following proofs contain some ideas in \cite{Gr1980}.

\begin{theorem}
Assume $\flat=\omega_1$. For a non-metrizable La\v{s}nev space $X$, the subspace $A_3(X)$ is a sequential space if and only if $A(X)$ is a sequential space.
\end{theorem}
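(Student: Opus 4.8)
The nontrivial direction is to show that if $A_3(X)$ is sequential then $A(X)$ is sequential; by Theorem~\ref{k-space-characterization} it suffices to prove that $A_3(X)$ sequential forces $X$ to be a topological sum of a space with a countable $k$-network of compact sets and a discrete space. The plan is to reuse the skeleton of the proof of (4)$\Rightarrow$(5) in Theorem~\ref{k-space-characterization}, verifying that every ingredient already goes through at the level $A_3(X)$ rather than $A_4(X)$, and then to isolate the single place where the hypothesis $\flat=\omega_1$ is genuinely needed. First I would record that $A_3(X)$ sequential implies $A_2(X)$ sequential, so by Lemma~\ref{aleph-space} the space $X$ is an $\aleph$-space, hence (compact subsets of La\v{s}nev spaces being metrizable) $X$ carries a $\sigma$-locally finite $k$-network $\mathcal P$ consisting of separable metric sets with compact closures; and by Fact~1 a non-metrizable La\v{s}nev $X$ contains a closed copy of $S_\omega$, so Lemma~\ref{l-sequential-a} (applied with $m_0=2$, $n_0=1$, noting $A_1(X)\cong X\oplus(-X)\oplus\{0\}$ and $M_1$ would embed closedly in $X$) shows $X$ contains no closed copy of $M_1$.

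The crucial step is the analogue of Claim~1: that $\mbox{NI}(X)$ is $\omega_1$-compact. Here the $A_4(M_3)$ argument is not available, since Lemma~\ref{s-omega-1} is stated for $A_4(M_3)$ and $A_3(M_3)$ may well be sequential. This is where $\flat=\omega_1$ enters. Suppose $\{x_\alpha:\alpha<\omega_1\}$ is closed discrete in $\mbox{NI}(X)$; as in Theorem~\ref{k-space-characterization} use paracompactness and the Fr\'echet--Urysohn property of the La\v{s}nev space $X$ to pick a discrete family of open sets $U_\alpha\ni x_\alpha$ and sequences $x(n,\alpha)\to x_\alpha$ inside $U_\alpha$, obtaining a closed copy of $M_3$ in $X$, hence (by \cite{U1991}) $A_3(M_3)$ embeds as a closed subspace of $A_3(X)$ and is therefore sequential. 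Now I would derive a contradiction by exhibiting, under $\flat=\omega_1$, a subset of $A_3(M_3)$ that is sequentially closed but not closed: enumerate the $\omega_1$-indexed branches and, using an unbounded family $\{f_\alpha:\alpha<\omega_1\}\subseteq{}^\omega\omega$ of size $\omega_1$ together with the explicit neighborhood base $\mathcal W$ of $0$ from Theorem~\ref{yamada-neighborhood}, build a set such as $\{x(f_\alpha(k),\alpha)-x_\alpha - (\text{compensating term})\}$ whose only possible limit point is $0$ (or some word of length $\le 2$) but which, by unboundedness, meets every basic neighborhood $W(P)$ of that point, while any convergent sequence out of it must — by the discreteness of the $U_\alpha$'s and an argument like the one in Lemma~\ref{l-sequential-a} — live in finitely many $C_\alpha$'s and hence be trivial. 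This is the step I expect to be the main obstacle: getting the bookkeeping right so that the set is sequentially closed (no nontrivial convergent sequences inside it, using that a convergent sequence in $A_3(M_3)$ has support meeting only finitely many $C_\alpha$) yet has a limit point outside it forced by the $\flat=\omega_1$ unbounded family interacting with the form of $W(P)$.

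Once Claim~1 is in hand the argument closes exactly as in Theorem~\ref{k-space-characterization}: the set $\mathcal P_1=\{\overline P: P\in\mathcal P,\ P\cap\mbox{NI}(X)\neq\emptyset\}$ is countable because a locally finite family has only countably many members meeting an $\omega_1$-compact set, each $\overline P$ is separable metric so $\overline P\cap\mbox{I}(X)$ is countable, and therefore $X_1:=\bigcup\mathcal P_1$ has a countable $k$-network of compact sets, is sequentially open hence open, and $X_2:=X\setminus X_1$ is open-and-closed discrete, giving $X=X_1\oplus X_2$ — i.e.\ condition (5). Then (5)$\Rightarrow$(2)$\Rightarrow$(1) of Theorem~\ref{k-space-characterization} yields that $A(X)$ is sequential. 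The converse direction, that $A(X)$ sequential implies $A_3(X)$ sequential, is immediate since $A_3(X)$ is closed in $A(X)$ and a closed subspace of a sequential space is sequential. I would also remark that the role of $\flat=\omega_1$ is precisely to prevent the phenomenon that, under $\flat>\omega_1$, $A_3$ of the relevant $M_3$-like space stays sequential — which is exactly the content of the counterexample promised in the abstract.
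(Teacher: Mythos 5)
Your overall reduction matches the paper's: the converse is immediate, and the forward direction reduces (via the proof of (4)$\Rightarrow$(5) of Theorem~\ref{k-space-characterization}) to showing that $\mbox{NI}(X)$ is $\omega_1$-compact, which is exactly where the paper also brings in $\flat=\omega_1$. The gap is in your plan for that key step: you propose to derive the contradiction by exhibiting, under $\flat=\omega_1$, a sequentially closed non-closed subset of $A_3(M_3)$. No such subset exists. The space $M_3$ is locally compact metrizable, $A_3(M_3)$ is sequential in ZFC (the paper's own Lemma~\ref{l-sequential} records that $A_3(M_3,S_\omega\oplus M_3)$ is sequential, and the final theorem of the paper, which under $\flat>\omega_1$ makes $A_3(S_\omega\oplus M_3)$ sequential and hence its closed subspace $A_3(M_3)$ as well, would otherwise be inconsistent). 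The obstruction is concrete: neighborhoods of $0$ (or of any $x_\beta$) in $A(M_3)$ are indexed by functions in ${}^{\omega_1}\omega$ ranging over the summands $C_\alpha$, and a candidate set that avoids nontrivial convergent sequences can contain only finitely many words per $C_\alpha$, so a single $h\in{}^{\omega_1}\omega$ dominating those finitely many indices produces a neighborhood missing it. An unbounded family $\{f_\alpha\}\subseteq{}^\omega\omega$ has nothing to act against inside $M_3$, because no point of $M_3$ carries an ${}^\omega\omega$-indexed neighborhood fan.

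The paper's construction therefore lives in $A_3(S_\omega\oplus M_3)$, not in $A_3(M_3)$: it couples the closed copy of $M_3$ (obtained from the failure of $\omega_1$-compactness) with the closed copy of $S_\omega=\{y\}\cup\{y(n,m):n,m\in\omega\}$ that $X$ has by Fact~1, setting $H_\alpha:=\{y(n,m)+x(n,\alpha)-x_\alpha:\ m\le f_\alpha(n),\ n\in\omega\}$ and $H:=\bigcup_{\alpha<\omega_1}H_\alpha\subset A_3$. The apex $y$ of $S_\omega$ \emph{does} have a neighborhood base indexed by ${}^\omega\omega$ (the sets $V_f$), so unboundedness of $\{f_\alpha:\alpha<\omega_1\}$ guarantees that every basic neighborhood $y+W(P)$ meets some $H_\alpha$ (pick $\alpha$ with $f_\alpha\ge g_1$ infinitely often and take $y(k,f_\alpha(k))+x(k,\alpha)-x_\alpha$), while $H$ is sequentially closed because the support of any convergent sequence from $H$ would have compact closure, which fails whether the sequence meets infinitely many $H_\alpha$ (infinitely many of the closed discrete points $x_\alpha$ in the support) or only finitely many (each $H_\alpha$ has finitely many members with a given first index $n$, so the support contains $y(n_i,m_i)$ with the $n_i$ distinct, a closed discrete subset of $S_\omega$). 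So the $S_\omega$ factor is not an optional convenience but the essential ingredient; as written, your argument stalls at exactly the step you flagged as the main obstacle, and it cannot be completed inside $A_3(M_3)$.
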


\begin{proof}
Clearly, it suffices to show the necessity. In order to show the sequentiality of $A(X)$, it suffices to show that the subspace $\mbox{NI}(X)$ is $\omega_1$-compact in $X$ by the proof of (4) $\Rightarrow$ (5) of Theorem ~\ref{k-space-characterization}. Next we shall show that the subspace $\mbox{NI}(X)$ is $\omega_1$-compact in $X$.

Suppose that $\flat=\omega_1$ and $A_3(X)$ is a sequential space. Then there exists a collection $\{f_\alpha\in ^\omega\omega: \alpha<\omega_1\}$ such that if $f\in ^{\omega}\omega$, then there exists $\alpha<\omega_1$ with $f_\alpha(n)>f(n)$ for infinitely may $n\in \omega$. Since $X$ is a non-metrizable La\v{s}nev space, the space $X$ contains a closed copy of $S_\omega$ by Fact 1. We rewrite the copy of $S_\omega$ as $Y: =\{y\}\cup\{y(n, m): m, n, \in \omega\}$, where $y(n, m)\to y$ as $m\to \infty$ for each $n\in\mathbb{N}$.

Assume on the contrary that the subspace $\mbox{NI}(X)$ is not $\omega_1$-compact in $X$. Then, by viewing the proof of $(3)\Rightarrow (4)$ in Theorem~\ref{k-space-characterization}, we can see that $X$ contains a closed copy of $M_3: =\bigoplus\{C_\alpha: \alpha<\omega_1\}$, where, for each $\alpha\in\omega_1$, the set $C_\alpha: =\{x(n, \alpha): n\in \omega\}\cup \{x_\alpha\}$ and $x(n, \alpha)\to x_\alpha$ as $n\rightarrow\infty$. Moreover, without loss of generality, we may assume that $Y \cap M_3=\emptyset$. Let $Z: =Y\bigoplus M_3$. Then we can define a uniform base $\mathcal{U}$ of the universal uniformity on $Z$ as follows.
For each $\alpha<\omega_1$ and $k\in\omega$, let $$V_{k, \alpha}: =\{x(m, \alpha): m\geq k\}\cup\{x_\alpha\}$$ and $$U_{k, \alpha}: =(V_{k, \alpha}\times V_{k, \alpha})\cup\Delta_\alpha,$$ where $\Delta_\alpha$ is the diagonal of $C_\alpha\times C_\alpha$. For each $f\in ^\omega\omega$ and $g\in ^{\omega_1}\omega$, let $$U(g, f): =\bigcup\{U_{g(\alpha), \alpha}: \alpha<\omega_1\}\cup (V_f\times V_f)\cup \Delta_Z,$$ where $V_f: =\{y\}\cup \{y(n, m): m\geq f(n), n\in \omega\}$ and $\Delta_Z$ is the diagonal of $Z\times Z$. Put $$\mathscr{U}: =\{U(g, f): g\in ^{\omega_1}\omega, f\in ^\omega\omega\}.$$
Then, the family $\mathscr{U}$ is a uniform base of the universal uniformity on the space $Z$. Put $\mathcal{W}: =\{W(P): P\in\mathscr{U}^{\omega}\}$. Then it follows from Theorem~\ref{yamada-neighborhood} that $\mathcal{W}$ is a neighborhood base of 0 in $A(Z)$.

For each $\alpha <\omega_1$, let $$H_\alpha: =\{y(n, m)+x(n, \alpha)-x_\alpha: m\leq f_\alpha(n), n\in \omega\}.$$ Let $H: =\cup_{\alpha<\omega_1}H_\alpha$. Then $H\subset A_3(Z)$. Then we have the following claim.

\smallskip
{\bf Claim 1:} The point $y\in \overline{H}\setminus H$ in $A_3(X)$.

\smallskip
{\it Proof of Claim 1.} Since $X$ is a La\v{s}nev space and $Z$ is closed in $X$, the group $A(Z)$ is topologically homeomorphic to a closed subgroup of $A(X)$. Therefore, it suffices to show that $y\in \overline{H}\setminus H$ in $A_3(Z)$. Obviously, the family $\{(y+U)\cap A_3(Z): U\in \mathcal{W}\}$ is a neighborhood base of $y$ in $A_3(Z)$.
Next we shall prove $(y+U)\cap A_3(Z)\cap H\neq \emptyset$ for each $U\in \mathcal{W}$, which implies $y\in \overline{H}\setminus H$ in $A_3(Z)$. Fix an $U\in\mathcal{W}$.  Then there exist a sequence $\{h_{i}\}_{i\in\omega}$ in $^{\omega_1}\omega$ and a sequence $\{g_{i}\}_{i\in\omega}$ in $^{\omega}\omega$ such that $$U=\{x_1-y_1+x_2-y_2+...+x_n-y_n: (x_i, y_i)\in U(h_i, g_i), i\leq n, n\in \omega\}.$$ Let $$B: =\{x'-y+x''-y'': (x', y)\in V_{g_1}\times V_{g_1}, (x'', y'')\in U_{h_{1}(\alpha), \alpha}, \alpha<\omega_1\}.$$ Then $B\subset U$. By the assumption, there exists $\alpha< \omega_1$ such that $f_\alpha (k)\geq g_1(k)$ for infinitely many $k$. Pick a $k'>h_{1}(\alpha)$ such that $(y(k', f_\alpha(k')), y)\in V_{g_1}\times V_{g_1}$. Then $$y(k', f_\alpha(k'))-y+x(k', \alpha)-x_\alpha\in U,$$ hence
\begin{eqnarray}
y+y(k', f_\alpha(k'))-y+x(k', \alpha)-x_\alpha&=&y(k', f_\alpha(k'))+x(k', \alpha)-x_\alpha\nonumber\\
&\in&((y+U)\cap A_3(Z))\cap H_\alpha.\nonumber
\end{eqnarray}
The proof of Claim 1 is completed.

\medskip
Since $A_3(X)$ is sequential and $H$ is not closed in $A_{3}(X)$ by Claim 1, there is a sequence $L\subset H$ such that $L\to z$ for some $z\in A_3(X)\setminus H$. Then the set $\mbox{supp}(L\cup\{z\})$ is bounded by \cite{AOP1989}, then the closure of $\mbox{supp}(L\cup\{z\})$ is compact since $X$ is a La\v{s}nev space. If $L$ meets infinitely many $H_\alpha$'s, then the set $\mbox{supp}(L\cup\{z\})$ contains infinitely many $x_\alpha$'s. This is a contradiction since $\{x_\alpha: \alpha<\omega_1\}$ is  discrete in $X$. If $L\cup\{z\}$ is contained in the union of finitely many $H_\alpha$'s, then the set $\mbox{supp}(L\cup\{z\})$ contains an infinite subset $\{y(n_i, m_i): i\in \mathbb{N}\}$ of $\{y(n, m): n, m\in \mathbb{N}\}$ such that $n_i\neq n_j$ if $i\neq j$, which is a contradiction. Therefore, the subspace $\mbox{NI}(X)$ is $\omega_1$-compact in $X$.
\end{proof}

Finally, we shall give an example to show that it can not be replaced ``$A_4(X)$'' with ``$A_3(X)$'' in Theorem~\ref{k-space-characterization} under the assumption of $\flat>\omega_1$. First, we give a technical lemma.

Let $C\subset M_3$ be a convergent sequence with the limit point. Since the subspaces $C\bigoplus S_\omega, M_3$ and $S_\omega$ are all $P^*$-embedded in the space $S_\omega\bigoplus M_3$ respectively, it follows from \cite[Problem 7.7.B.]{AT2008} that the following three subgroups $$A(C\bigoplus S_\omega, S_\omega\bigoplus M_3), A(M_3, S_\omega\bigoplus M_3)\ \mbox{and}\ A(S_\omega, S_\omega\bigoplus M_3)$$ in $A(S_\omega\bigoplus M_3)$ are topologically isomorphic to the groups $A(C\bigoplus S_\omega)$, $A(M_3)$ and $A(S_\omega)$ respectively. Then, by \cite[Corollary 7.4.9]{AT2008} and \cite[Theorem 4.9]{Y1993}, $A(C\bigoplus S_\omega)$, $A(M_3)$ and $A(S_\omega)$ are all sequential, hence we have the following lemma.

\begin{lemma}\label{l-sequential}
The following three subspaces $$A_3(C\bigoplus S_\omega, S_\omega\bigoplus M_3), A_3(M_3, S_\omega\bigoplus M_3)\ \mbox{and}\ A_3(S_\omega, S_\omega\bigoplus M_3)$$ are all sequential and closed in $A(S_\omega\bigoplus M_3)$.
\end{lemma}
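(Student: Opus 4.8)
The plan is to obtain the lemma as a direct consequence of the group-level statements established in the paragraph above, combined with two standard facts: for every Tychonoff space $Z$ and every $n\in\mathbb{N}$, the subspace $A_n(Z)$ is closed in $A(Z)$; and for a closed subspace $Y$ of a Tychonoff space $X$, the subgroup $A(Y,X)$ is closed in $A(X)$ (see \cite{AT2008}). Besides these I only need the elementary fact that a closed subspace of a sequential space is sequential.

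Write $X:=S_\omega\bigoplus M_3$ and let $Y$ range over $C\bigoplus S_\omega$, $M_3$ and $S_\omega$. Since $C$ contains its limit point, $C$ is closed in $M_3$, and hence $Y$ is closed in $X$ in all three cases (indeed $M_3$ and $S_\omega$ are even clopen in $X$). First I would deal with closedness. An element of $A_3(X)$ whose support is contained in $Y$ is precisely a word of reduced length at most $3$ in the free basis $Y$, so $A_3(Y,X)=A(Y,X)\cap A_3(X)$. By the two facts quoted above, $A(Y,X)$ and $A_3(X)$ are both closed in $A(X)$, hence $A_3(Y,X)$ is closed in $A(X)$.

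For sequentiality I would use that, as noted before the lemma, the topological isomorphism $A(Y,X)\cong A(Y)$ is the one extending the identity embedding of $Y$; it therefore fixes the free basis $Y$ pointwise and so preserves reduced word length, and consequently restricts to a homeomorphism $A_3(Y,X)\cong A_3(Y)$. Now $A_3(Y)$ is a closed subspace of $A(Y)$ by the first quoted fact, and $A(Y)$ is sequential for each of $Y=C\bigoplus S_\omega$, $M_3$, $S_\omega$ by \cite[Corollary 7.4.9]{AT2008} and \cite[Theorem 4.9]{Y1993}, as recalled above. Since a closed subspace of a sequential space is sequential, $A_3(Y)$ is sequential, and therefore so is its homeomorphic copy $A_3(Y,X)$.

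Once the group-level results are granted, the argument is bookkeeping, and I do not expect a genuine obstacle. The only point requiring a moment's care is the compatibility of the isomorphism $A(Y,X)\cong A(Y)$ with the length filtration, so that it actually descends to the degree-$3$ layers; but this is immediate from the fact that the isomorphism extends the identity on the free basis $Y$. Thus the lemma is essentially a corollary recording the three cases in the uniform form that will be needed later.
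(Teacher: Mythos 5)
Your proposal is correct and follows essentially the same route as the paper, which derives the lemma from the topological isomorphisms $A(Y,X)\cong A(Y)$ (via $P^*$-embedding) together with the sequentiality of $A(C\bigoplus S_\omega)$, $A(M_3)$ and $A(S_\omega)$; the paper leaves the remaining bookkeeping implicit ("hence we have the following lemma"), and you have simply written it out, including the length-preservation of the isomorphism and the closedness of $A_3(Y,X)=A(Y,X)\cap A_3(X)$.
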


Let $S(A)$ denote the set of all limit points of a subset $A$ in a space $X$. In order to show the last theorem, we need the following technical lemma.

\begin{lemma}\label{l-4}
Let $B$ be a sequentially closed subset of $A_{3}(S_\omega\bigoplus M_3)$.

\smallskip
(a) If $$B\cap \{y+x'-y': (x', y')\in V_f\times V_f\}\neq \emptyset$$ for any $f\in ^\omega\omega$, then $y\in B$.

\smallskip
(b) If $$B\cap \{y+x'-y': (x', y')\in V_{h(\alpha), \alpha}\times V_{h(\alpha), \alpha}, \alpha<\omega_1\}\neq \emptyset$$  for any $h\in ^{\omega_1}\omega$, then $y\in B$.

\smallskip
(c) If $$B\cap \{y+x'-y'+x''-y'': (x', y')\in V_f\times V_f, (x'', y'')\in V_g\times V_g\}\neq \emptyset$$ for any $f, g\in ^\omega\omega$, then $y\in B$.
\end{lemma}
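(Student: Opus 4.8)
The plan is to treat (a), (b), (c) by one scheme. Write $Z:=S_\omega\bigoplus M_3$ and keep the notation of the text: $S_\omega=\{y\}\cup\{y(n,m):n,m\in\omega\}$, $M_3=\bigoplus_{\alpha<\omega_1}C_\alpha$ with $C_\alpha=\{x(n,\alpha):n\in\omega\}\cup\{x_\alpha\}$, the sets $V_f$ ($f\in{}^{\omega}\omega$) and $V_{k,\alpha}$ ($k\in\omega$, $\alpha<\omega_1$). Every word appearing in the three hypotheses is supported in $S_\omega$ (for (a) and (c)) or in $C_\alpha\cup S_\omega$ for a single $\alpha$ (for (b)), so by Lemma~\ref{l-sequential} it lies in one of the sequential subspaces $A_3(S_\omega,Z)$ or $A_3(C_\alpha\bigoplus S_\omega,Z)$ of $A_3(Z)$. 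The trace of $B$ on such a subspace is sequentially closed there, hence \emph{closed} there. So, assuming $y\notin B$, the complement of that trace is an open neighbourhood of $y$, and I would use Theorem~\ref{yamada-neighborhood} (a neighbourhood base of $0$ in $A(Z)$ consists of the sets $W(P)$ for sequences $P=(U_i)_{i\in\omega}$ of entourages $U_i\in\mathcal{U}_Z$, so a basic neighbourhood of $y$ in $A_3(S_\omega,Z)$ is $(y+W(P))\cap A_3(S_\omega,Z)$, and likewise over $C_\alpha\bigoplus S_\omega$) to fit a whole member of the hypothesised family inside this open set; that member is then disjoint from $B$, contradicting the hypothesis. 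I expect the only genuine content to be a uniformity sublemma plus the bookkeeping that $W(P)$ already contains the short words one needs.

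The uniformity sublemma I would isolate is: \emph{for every $U\in\mathcal{U}_Z$ there is $f\in{}^{\omega}\omega$ with $V_f\times V_f\subseteq U$; and for every $\alpha<\omega_1$ and $U\in\mathcal{U}_Z$ there is $k\in\omega$ with $V_{k,\alpha}\times V_{k,\alpha}\subseteq U$.} To see it, note $U$ contains $\{(a,b):\rho(a,b)<1\}$ for some continuous pseudometric $\rho$ on $Z$; since $y(n,m)\to y$, for each fixed $n$ we have $\rho(y(n,m),y)\to 0$, so choosing $f(n)$ with $\rho(y(n,m),y)<1/2$ for all $m\ge f(n)$ gives $\rho(a,b)\le\rho(a,y)+\rho(y,b)<1$ for all $a,b\in V_f$, i.e.\ $V_f\times V_f\subseteq U$; the identical triangle-inequality argument on the convergent sequence $C_\alpha\to x_\alpha$ gives the second claim. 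This is the one place where the special geometry of $S_\omega$ and of a convergent sequence is used.

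Granting this, here is how (a) would go. Put $B':=B\cap A_3(S_\omega,Z)$; it is closed in the sequential space $A_3(S_\omega,Z)$, and every $y+x'-y'$ with $x',y'\in V_f$ lies in $A_3(S_\omega,Z)$. If $y\notin B$, then $y\notin B'$, so Theorem~\ref{yamada-neighborhood} yields $P=(U_i)_{i\in\omega}$, $U_i\in\mathcal{U}_Z$, with $(y+W(P))\cap A_3(S_\omega,Z)\cap B'=\emptyset$. Choose $f$ with $V_f\times V_f\subseteq U_1$. Then for all $x',y'\in V_f$ the single-term element $x'-y'$ lies in $W(P)$, so $y+x'-y'\in(y+W(P))\cap A_3(S_\omega,Z)$; thus the whole set $\{y+x'-y':(x',y')\in V_f\times V_f\}$ is disjoint from $B'$, and being contained in $A_3(S_\omega,Z)$ it is disjoint from $B$, contradicting the hypothesis of (a). Part (c) is word-for-word the same, except that one also chooses $g$ with $V_g\times V_g\subseteq U_2$ and uses the two-term elements $(x'-y')+(x''-y'')\in W(P)$ with $(x',y')\in U_1$, $(x'',y'')\in U_2$; all elements involved are supported in $S_\omega$, and the ones lying in $B$ have reduced length $\le 3$, so nothing is lost by working inside $A_3(S_\omega,Z)$.

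For (b) I would run the same argument once for each $\alpha<\omega_1$. Fix $\alpha$; by Lemma~\ref{l-sequential} the set $B_\alpha:=B\cap A_3(C_\alpha\bigoplus S_\omega,Z)$ is closed in the sequential space $A_3(C_\alpha\bigoplus S_\omega,Z)$, so if $y\notin B$ then Theorem~\ref{yamada-neighborhood} gives $P_\alpha=(U_i^{\alpha})_i$ with $(y+W(P_\alpha))\cap A_3(C_\alpha\bigoplus S_\omega,Z)$ disjoint from $B_\alpha$, and the sublemma provides $h(\alpha)\in\omega$ with $V_{h(\alpha),\alpha}\times V_{h(\alpha),\alpha}\subseteq U_1^{\alpha}$. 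Then $\{y+x'-y':x',y'\in V_{h(\alpha),\alpha}\}\subseteq(y+W(P_\alpha))\cap A_3(C_\alpha\bigoplus S_\omega,Z)$ is disjoint from $B_\alpha$, hence from $B$. Collecting these choices yields $h\in{}^{\omega_1}\omega$ for which the set of (b) is disjoint from $B$, contradicting its hypothesis, so $y\in B$. The hard part, as flagged, is precisely the sublemma together with checking that the hypothesised sets really live inside the sequential subspaces supplied by Lemma~\ref{l-sequential} (so that ``sequentially closed'' can be upgraded to ``closed'') and that only the $k=1$ and $k=2$ slices of $W(P)$, not its long generic words, are needed.
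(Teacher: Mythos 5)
Your proposal is correct and follows essentially the same route as the paper: reduce to the trace of $B$ on the closed sequential subspaces supplied by Lemma~\ref{l-sequential} (so ``sequentially closed'' upgrades to ``closed'') and observe, via the uniformity/pseudometric argument you isolate, that the hypothesised families are cofinal in the neighbourhood filter of $y$ given by Theorem~\ref{yamada-neighborhood}. The only divergence is cosmetic: in (b) the paper first pigeonholes to a single $\beta$ and then extracts a sequence $y+x'-y'\to y$ with $x',y'\to x_\beta$ directly, whereas you argue contrapositively for each $\alpha$ separately; and you make explicit the entourage-versus-$V_f$ sublemma that the paper leaves implicit.
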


\begin{proof}
($a$) Let $B_1: =B\cap A_3(S_\omega, S_\omega\bigoplus M_3)$. Then $B_1$ is sequentially closed in the subspace $A_3(S_\omega, S_\omega\bigoplus M_3)$, hence it is closed by Lemma ~\ref{l-sequential}. Since $$B_1\cap \{y+x'-y': (x', y')\in V_f\times V_f\}=B\cap \{y+x'-y': (x', y')\in V_f\times V_f\}\neq \emptyset$$ for any $f\in ^\omega\omega$, we have $y=y+y-y\in \overline{B_1}=B_1\subset B$

\smallskip
($b$) Since $B\cap \{y+x'-y': (x', y')\in V_{h(\alpha), \alpha}\times V_{h(\alpha), \alpha}, \alpha<\omega_1\}\neq \emptyset$ for any $h\in ^{\omega_1}\omega$, it is easy to see that there exists $\beta$ such that $$B\cap \{y+x'-y': (x', y')\in V_{h(\beta), \beta}\times V_{h(\beta), \beta}\}\neq \emptyset$$ for any $h\in ^{\omega_1}\omega$. Then it easily obtain that $y=y+x_\beta-x_\beta\in S(B)=B$.

\smallskip
($c$) Let $B_1: = B\cap A_3(S_\omega, S_\omega\bigoplus M_3)$. By \cite[Theorem 7.4.5]{AT2008} and Lemma ~\ref{l-sequential}, the subspace $A_3(S_\omega, S_\omega\bigoplus M_3)$ is closed and sequential, which implies that $B_1$ is closed in $A_3(S_\omega, S_\omega\bigoplus M_3)$. Let $$A: =B_{1}\cap \{y+x'-y'+x''-y'': (x', y')\in V_f\times V_f, (x'', y'')\in V_g\times V_g, f, g \in ^\omega\omega\}.$$ Then $y\in \overline{A}$, and $A\subset B_1$. It implies that $y\in B_1\subset B$.
\end{proof}

\begin{theorem}
Assume $\flat>\omega_1$. There exists a non-metrizable La\v{s}nev space $X$ such that $A_3(X)$ is a sequential space but $A(X)$ is not a sequential space.
\end{theorem}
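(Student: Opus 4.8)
The natural candidate --- and the one for which the preceding lemmas are evidently tailored --- is $X:=S_\omega\bigoplus M_3$. Two of the three required properties come essentially for free. First, $X$ is a non-metrizable La\v{s}nev space: a topological sum of La\v{s}nev spaces is La\v{s}nev, and $S_\omega$ is a clopen non-metrizable subspace of $X$. Second, $A(X)$ is not sequential: by \cite{T1974} we have $A(X)\cong A(S_\omega)\times A(M_3)$, so $A(M_3)$ is a closed subgroup of $A(X)$; were $A(X)$ sequential then so would be $A(M_3)$, forcing the uncountable closed discrete set $\mbox{NI}(M_3)=\{x_\alpha:\alpha<\omega_1\}$ to be separable by \cite[Theorem~2.11]{AOP1989} --- which is absurd. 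Thus the whole content of the theorem is the assertion that, under $\flat>\omega_1$, the subspace $A_3(X)$ is sequential.

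For this I would begin with the usual reduction. The space $X$ is stratifiable, so $A_3(X)$ is stratifiable by Lemma~\ref{stratifiable k} and hence has a $G_{\delta}$-diagonal; since a $k$-space with a $G_{\delta}$-diagonal is sequential \cite{Gr1984}, it suffices to show that every sequentially closed $B\subseteq A_3(X)$ is closed. So fix such a $B$ and a point $g\in\overline{B}$; the goal is $g\in B$. The apex $y$ of $S_\omega$ is the only non-first-countable point of $X$, and the argument splits according to how $y$ enters $g$. If $y\notin\mbox{supp}(g)$, a routine inspection --- using $A(X)\cong A(S_\omega)\times A(M_3)$, that $X$ is closed in $A(X)$, and that $S_\omega$ is Fr\'echet-Urysohn --- shows that $g$ and all points of $B$ near $g$ sit inside one fixed closed, sequential subspace of $A_3(X)$, obtained from the subgroups of Lemma~\ref{l-sequential} and from $A_2(X)$ (sequential because $(X\bigoplus-X\bigoplus\{0\})^2$ is sequential and then $i_2$ is closed by \cite[Proposition~4.8]{AOP1989}) by passing to closed subspaces and translating; since $B$ meets such a subspace in a relatively sequentially closed --- hence closed --- set, $g\in B$. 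If $y\in\mbox{supp}(g)$, write $g=ry+c$ with $r\in\mathbb{Z}\setminus\{0\}$ and $y\notin\mbox{supp}(c)$: when $(r,c)\neq(\pm1,0)$ the reduced length of $g$ is at least $2$, so the length budget $\leq 3$ leaves no room to attach to $g$ a length-$2$ tail $x(n,\alpha)-x_\alpha$ living in a spine; hence, exactly as before, $g$ and all of $B$ near $g$ lie in a single closed, sequential subspace of $A_3(X)$, forcing $g\in B$. The only genuinely hard case is therefore $g=\pm y$, and by the homeomorphism $t\mapsto -t$ of $A(X)$ we may assume $g=y$.

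It remains to treat $g=y$: assuming $y\notin B$, I must exhibit a basic neighbourhood of $y$ disjoint from $B$. Writing an element $g'$ near $y$ as $g'=y+(g'-y)$ with $g'-y\in W(P)$ and cancelling down to reduced length $\leq 3$, a short case analysis shows that the elements of $B$ that can lie arbitrarily close to $y$ fall into two families. The first are the ``pure'' ones: $y+x'-y'$ with $(x',y')\in V_f\times V_f$, $y+x'-y'$ with $(x',y')\in V_{h(\alpha),\alpha}\times V_{h(\alpha),\alpha}$ for $\alpha<\omega_1$, and $y+x'-y'+x''-y''$ with both pairs in $V_f\times V_f$'s; by the contrapositives of Lemma~\ref{l-4}(a),(b),(c), since $y\notin B$ there are functions beyond which $B$ already misses all of these. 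The second are the ``mixed-over-spines'' ones, essentially $y(k,m)+x(n,\alpha)-x_\alpha$, where $x(n,\alpha)-x_\alpha$ is a length-$2$ tail in the $\alpha$-th spine; here sequential closedness of $B$ does the bounding --- any sequence $y(k,m_i)+x(n_i,\alpha)-x_\alpha$ with $m_i,n_i\to\infty$ converges to $y$, so, since $y\notin B$, for each $\alpha<\omega_1$ one can read off a function $f_\alpha\in{}^{\omega}\omega$ measuring how far $B$ reaches towards $y$ along the $\alpha$-th spine. This is exactly where $\flat>\omega_1$ enters: the family $\{f_\alpha:\alpha<\omega_1\}$ is then bounded in ${}^{\omega}\omega$. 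Combining a bounding function with the functions coming from Lemma~\ref{l-4}, one assembles via Theorem~\ref{yamada-neighborhood} a single basic neighbourhood $y+W(P)$ of $y$ whose ``$S_\omega$-radius'' dominates the bounding function everywhere (and is small enough to respect the functions from Lemma~\ref{l-4}) and whose ``spine-$\alpha$ radius'' is large enough to clear the $\alpha$-th slice of $B$; a direct check against each of the finitely many surviving word-types shows $y+W(P)$ meets $B$ in nothing. Hence $y\notin\overline{B}$, the required contradiction, and $A_3(X)$ is sequential.

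The step I expect to be the main obstacle is precisely this last one, together with the case bookkeeping it rests on: one must enumerate all reduced words of length $\leq 3$ lying close to $y$, attach to each a workable neighbourhood base via Theorem~\ref{yamada-neighborhood} (delicate because $S_\omega$ is not first countable and the $\omega_1$ spines $C_\alpha$ are interwoven inside $A_3(X)$), verify that each such word-type is either covered by Lemma~\ref{l-4} or is of ``mixed-over-spines'' type, and then construct one neighbourhood of $y$ that simultaneously defeats all of these types --- the place where Yamada's description of neighbourhoods in $A(X)$ has to be weighed against the boundedness of $\omega_1$-sized subfamilies of ${}^{\omega}\omega$, i.e.\ against exactly the hypothesis $\flat>\omega_1$ (which is the precise negation of the unbounded family driving the counterexample when $\flat=\omega_1$). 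Everything else --- the reduction to sequentially closed sets, the stratifiability of $A_3(X)$, and the routine cases handled by Lemma~\ref{l-sequential} and the sequentiality of $A_2(X)$ --- should go through as indicated.
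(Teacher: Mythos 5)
Your proposal is correct and follows essentially the same route as the paper: the same space $X=S_\omega\bigoplus M_3$, the same argument that $A(X)$ is not sequential via $A(M_3)$ and \cite[Theorem 2.11]{AOP1989}, the same reduction of a putative sequentially-closed-but-not-closed set to a limit point equal to the apex $y$ (the paper's Claim 2, done there by ruling out $y(n,m)$, $x(n,\beta)$ and $x_\beta$ case by case), and the same final step using Lemma~\ref{l-4} for the ``pure'' word types and $\flat>\omega_1$ to bound the spine functions $f_\alpha$ before assembling a single neighbourhood $y+W(P)$ missing the set (the paper's Claim 3). The only cosmetic difference is that the paper first trims $H$ down to words of length exactly $3$ using the sequentiality of $A_2(X)$ and paracompactness, whereas you organize the same cases by the support of the limit point.
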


\begin{proof}
Let $X=S_\omega\bigoplus M_3$; then $X$ is a non-metrizable La\v{s}nev space. However, $A(X)$ is not a sequential space. Indeed, assume on the contrary that $A(X)$ is a sequential space. Since $A(M_3)$ is topologically homeomorphic to a closed subset of $A(X)$, the group $A(M_3)$ is sequential, then it follows from  \cite[Theorem 2.11]{AOP1989} that $\mbox{NI}(M_3)$ is separable, which is a contradiction since $\mbox{NI}(M_3)$ is a uncountable closed discrete subset in $M_3$.

Next, we shall prove that $A_3(X)$ is a sequential space. Assume on the contrary that $A_3(X)$ is not a sequential space. Then there exists a subset $H$ in $A_3(X)$ such that $H$ is a sequentially closed subset in $A_3(X)$, but not closed in $A_3(X)$. Then there exists a point $x_{0}$ belonging to $A_3(X)$ such that $x_{0}\in \overline{H}\setminus H$ in $A_3(X)$.
Let $H_1: =A_2(X)\cap H$ and $H_2: =(A_3(X)\setminus A_2(X))\cap H$. Since $H_1$ is sequentially closed and $A_2(X)$ is sequential, the set $H_1$ is closed. If $x_0\in \overline{H_1}$, then $x_0\in \overline{H_1}=H_1\subset H$, this is a contradiction. Hence $x_{0}\in \overline{H_2}$. Then since $A_3(X)$ is paracompact (in fact, $A_3(X)$ is stratifiable), there exist open subsets $W_1, W_2$ in $A_3(X)$ such that $x_0\in W_1, H_1\subset W_2$ and $\overline{W_1}\cap \overline{W_2}=\emptyset$. Let $H': =\overline{W_2}\cap H\subset A_3(X)\setminus A_2(X)$. Then the point $x_0$ belongs to $\overline{H'}$,  the set $H'$ is sequentially closed and $H'$ does not have any limit point in $X\cup (-X)$. Without loss of generality, we may assume $H=H'$.

Since the length of each element of $H$ is 3, the length of $x_{0}$ is 1 or 3.
However, the length of $x_{0}$ can not be 3. Assume on the contrary that $x_{0}\in A_3(X)\setminus A_2(X)$; then let $V$ be an open neighborhood of $x_{0}$ in $A_3(X)\setminus A_2(X)$ such that $\overline{V}\subset A_3(X)\setminus A_2(X)$. Then $\overline{V}\cap H$ is sequentially closed in the sequential subspace $A_3(X)\setminus A_2(X)$, hence $\overline{V}\cap H$ is closed and $x_{0}\in H$, this is a contradiction. Therefore, the length of $x_{0}$ is 1. Without loss of generality, we may assume that $x_{0}\in X$. In order to obtain a contradiction, we shall show that $x_{0}=y$, but $y\not\in \overline{H}$. We divide the proof in two
claims.

\smallskip
{\bf Claim 2:} We have $x_{0}=y$.
\smallskip

{\it Proof of Claim 2. }It suffices to show that $x_{0}\neq x$ for each $x\in X\setminus\{y\}$. Assume on the contrary that $x_{0}\in X\setminus\{y\}$. In order to obtain a contradiction, we divide the proof in three
cases.

\smallskip
{\bf Subcase 1}: For some $n, m\in\omega$, we have $x_{0}=y(n, m)$.

\smallskip
Clearly, ones can choose $f\in ^\omega\omega$ such that $y(n, m)\notin V_f$. Fix an arbitrary $g\in ^{\omega_1}\omega$, and then let $P: =\{U(g,f), U(g,f),... \}\in\mathscr{U}^{\omega}$. Then
$(x_{0}+W(P))\cap H\neq \emptyset$. Since $x_{0}=y(n, m)\notin V_f$, each element of $(x_{0}+W(P))\cap H$ has the form $x_{0}+x'-y'$, where $(x', y')\in U(g, f)$. Put $$B: =\{x'-y': x_{0}+x'-y'\in (x_0+W(P))\cap H, (x', y')\in U(g, f)\}.$$ Then $B\subset -x_{0}+H$, and it follows from $x_{0}\in\overline{H}$ that $0\in \overline{B}$. Moreover, it is obvious that $-x_{0}+H$ is sequentially closed in $ A_{4}(X)$. Hence the set $(-x_{0}+H)\cap A_{2}(X)$ is sequentially closed in $A_{2}(X)$, then it follows that $(H-x_{0})\cap A_{2}(X)$ is closed since $A_{2}(X)$ is a sequential space. However, since $B\subset (-x_{0}+H)\cap A_{2}(X)$, we have $$0\in \overline{(-x_{0}+H)\cap A_{2}(X)}=(-x_{0}+H)\cap A_{2}(X)$$ in $A_{2}(X)$, which shows that $x_{0}\in H,$ this is a contradiction.

\smallskip
{\bf  Subcase 2}: For some $n\in\omega$ and $\beta\in\omega_1$, we have $x_{0}=x(n, \beta)$.

\smallskip
Fix arbitrary $f\in$ $^{\omega}\omega$ and $g\in$ $^{\omega_1}\omega$ with $g(\beta)>n$. Then $(x_{0}+W(P))\cap H\neq \emptyset$, where $P=(U(g, f))\in\mathscr{U}^{\omega}$. Obviously, each element of $(x_{0}+W(P))\cap H$ has the form $x_{0}+x_k'-y_k'\in H$, where $(x_k', y_k')\in U(g, f)$. By a proof analogous to Subcase 1, we can see that $x_{0}\in H$, which is a contradiction.

\smallskip
{\bf  Subcase 3}: For some $\beta\in\omega_1$, we have $x_{0}=x_\beta$.

Obviously, we have $x_{0}\in \overline{(x_{0}+W(P))\cap H}$ for any $P\in \mathscr{U}^\omega$. Fix $f\in$ $^\omega\omega$ and $g\in$ $^{\omega_1}\omega$. Let $$A: =(x_{0}+W(P))\cap H,$$ $$C: =\{x_{0}+x'-y': (x', y')\in U(g, f), x'\neq x_\beta, y'\neq x_\beta, x'\neq y'\}\cap H$$ and
$$D: =\{x'+x''-y'': (x'', y'')\in U(g, f), x'\in V_{g(\beta), \beta}\setminus\{x_{\beta}\}, x'\neq y''\}\cap H.$$ Clearly, we have $A=C\cup D$. If $x_{0}\in \overline{C}$, then, by a proof analogous to Subcase 1, we can see that $x_{0}\in H$, which is a contradiction. Hence we assume $x_{0}\in \overline{D}$.

Let $$B_1: =H\cap A_3(C_{\beta}\bigoplus S_\omega, S_\omega\bigoplus M_3)$$ and $$B_2: =H\cap A_3(M_3, S_\omega\bigoplus M_3),$$ where $C_{\beta}: =\{x_\beta\}\cup \{x(n, \beta): n\in \mathbb{N}\}$. By Lemma ~\ref{l-sequential}, the sets $B_1$ and $B_2$ are sequentially closed subsets of the spaces $A_3(C_{\beta}\bigoplus S_\omega, S_\omega\bigoplus M_3)$ and $A_3(M_3, S_\omega\bigoplus M_3)$ respectively, therefore, closed in $A(X)$. Then $B_1\cup B_2$ is closed in $A(X)$, hence we have $x_{0}\in \overline{D}\subset B_1\cup B_2\subset H$, which is a contradiction.

\smallskip
Therefore, we have $x_{0}=y$. The proof of Claim 2 is completed.

\smallskip
{\bf Claim 3:} The point $y\notin \overline{H}$ in $A(X)$.
\smallskip

{\it proof of Claim 3.} Obviously, it suffices to show that there exist two mappings $h\in ^\omega\omega$ and $g\in ^{\omega_1}\omega$ such that $(y+W(P))\cap H=\emptyset$, where $P=(U(g, h), U(g, h), ...)$. Since $y\not\in H$, it follows from Lemma~\ref{l-4} that we have the following statements:

\smallskip
($a$) there exists $h_1\in ^\omega\omega$ such that $\{y+x'-y': (x', y')\in V_{h_1}\times V_{h_1}\}\cap H=\emptyset$;
\smallskip

($b$) there exists $g_1\in ^{\omega_1}\omega$ such that $$\{y+x'-y': (x', y')\in V_{g_{1}(\alpha), \alpha}\times V_{g_{1}(\alpha), \alpha}, \alpha<\omega_1\}\cap H=\emptyset;$$

\smallskip
($c$) there exist $h_2, h_3\in ^\omega\omega$ such that $$\{y+x'-y'+x''-y'': (x', y')\in V_{h_2}\times V_{h_2}, (x'', y'')\in V_{h_3}\times V_{h_3}\}\cap H=\emptyset.$$

Therefore, without loss of generality, we may assume that $$H\subset \{y(n, m)+x_{1}-y_{1}: n, m\in \mathbb{N}, x_{1}, y_{1}\in C_{\alpha}, \alpha<\omega_1\}.$$

 Fix an arbitrary $\alpha<\omega_1$. Since $y\not\in H$, there exist a $f_\alpha\in$ $^\omega\omega$ and $n(\alpha)\in\mathbb{N}$ such that $$\{y(n, m)+x_{1}-y_{1}: n, m\in \mathbb{N}, m\geq f_\alpha(n), x_{1}, y_{1}\in V_{n(\alpha), \alpha}\}\cap H=\emptyset.\ \ \ \    (\star)$$
By the assumption of $\flat>\omega_1$, there is a $f\in ^\omega\omega$ such that $f_\alpha\leq^{*}f$ for each $\alpha<\omega_1$. For each $\alpha<\omega_1$, let $k(\alpha)$ be the smallest natural number such that $f_\alpha(k)\leq f(k)$ whenever $k\geq k(\alpha)$. We claim that, for each $\alpha<\omega_1$, there exists $n^{\prime}(\alpha)\in \mathbb{N}$ with $n^{\prime}(\alpha)\geq n(\alpha)$ such that $$\{y(j, m)+x_{1}-y_{1}: j<k(\alpha), m\in \mathbb{N}, x_{1}, y_{1}\in V_{n^{\prime}(\alpha), \alpha}\}\cap H=\emptyset.\ \ \ \ \ \ \ \    (\star\star)$$ Assume the converse. Then it is easy to see that there is a convergent sequence of $H$ with the limit point in $X$, which is a contradiction with the above assumption.
Let $g\in ^{\omega_1}\omega$ with $g(\alpha)=n'(\alpha)$ and $P=(U(g, f), U(g, f), ...)$. It follows from ($\star$) and ($\star\star$) ones can see $$\{y(n, m)+x_{1}-y_{1}: y(n, m)\in V_{f}, (x_{1}, y_{1})\in V_{g(\alpha), \alpha}\times V_{g(\alpha), \alpha}, \alpha<\omega_1, n, m\in\mathbb{N}\}\cap H=\emptyset.$$ Then we have $(y+W(P))\cap H=\emptyset$. The proof of Claim 3 is completed.

By Claims 2 and 3, we obtain a contradiction. Hence $A_{3}(X)$ is not a sequential space.
\end{proof}

\end{document}